\newtheorem{lem}{Lemma}[section]   
\newtheorem{cor}[lem]{Corollary}
\newtheorem{prop}[lem]{Proposition}
\newtheorem{thm}[lem]{Theorem}
\theoremstyle{remark}
\newcommand\cl{\operatorname{cl}}
\newcommand\rk{\operatorname{r}}
\newcommand\HP{\mathrm{HP}}
\newcommand\Hc{\mathrm{H}}
\newcommand\cL{\mathcal L}
\begin{document}

\begin{center}

\Large{\bf Harmonic Conjugation in Harmonic Matroids}
\normalsize

{\sc Rigoberto Fl\'orez} \footnote {Part of the work was performed
at the State University of New York at Binghamton and formed part of
the author's doctoral dissertation.}

{\sc University of South Carolina Sumter\\
     Sumter, SC, U.S.A.\ 29150-2498}

\footnotesize

\normalsize


\end{center}

\bigskip

\footnotesize {\it Abstract:} {We study a generalization of the
concept of harmonic conjugation from projective geometry and full
algebraic matroids to a larger class of matroids called \emph
{harmonic matroids}. We use harmonic conjugation to construct a
projective plane of prime order in harmonic matroids without using
the axioms of projective geometry. As a particular case we have a
combinatorial construction of a projective plane of prime order in
full algebraic matroids.}

\normalsize

\thispagestyle{empty}

\vspace{0.2cm}
\section {Introduction}
Lindstr\"om  \cite{lhc} generalized the concept of harmonic
conjugation from projective geometry to full algebraic matroids. He
used this concept to construct many algebraically nonrepresentable
matroids. The method of harmonic conjugation generalizes to many
other matroids. Here we prove that some basic properties of harmonic
conjugation in projective geometry can be generalized to a larger
class of matroids called harmonic matroids of which the full
algebraic matroids are an example. Also, we show that harmonic
conjugation gives a geometric construction of a projective plane of
prime order in any harmonic matroid without using the axioms of
projective geometry.

We show in section \ref{harmonicconjugation} that our generalized
harmonic conjugacy has the usual symmetries. In the main theorem
(Theorem \ref {PP}) we show that if a small matroid called $L_p$ is
embeddable in a harmonic matroid $M$, then $L_p$ can be extended
using only harmonic conjugation to a projective plane of order $p$
prime, in $M$. As a corollary, this construction holds in full
algebraic matroids. In the section \ref{minimalmatroid} we show that
a smaller submatroid $R_{\text {cycle}} [p]$ of  $L_p$ is a minimal
matroid that extends by harmonic conjugation to a projective plane
of order $p$ prime. In the last section we generalize the concept of
harmonic sequence from projective geometry to harmonic matroids. We
prove that this sequence gives rise to a finite field and a M\"obius
harmonic net in a harmonic matroid.

A question raised by Lindstr\"om \cite {lam} asks which projective
geometries can be embedded in a full algebraic matroid. He gave a
partial answer to this question in \cite {lpg}, constructing some
examples. Evans and Hrushovski \cite {eh} gave a complete answer
using model theory and the theory of algebraic groups. It is still
open how to solve the question using algebraic or geometric
techniques. In this paper I make a contribution to the solution.

\section {Preliminaries}

We denote the closure operator and the rank function of matroids by
$\cl$ and $\rk$, respectively.

For elements $y, x, z, o, q, r, s$ of a matroid $M$, define $\HP(y,
x, z; o, q, r, s)$ to mean that the restriction $ M \mid \{y, x, z,
o, q, r, s \}$ is either of the matroids in Figure \ref{f1}. For
elements $y,x,z$ of $M$, define $x'$ to be a \emph{harmonic
conjugate} of $x$ with respect to $y$ and $z$ denoted
$\Hc(y,z;x,x')$, to mean that there are elements $y, x, z, o, q, r,
s$ with $\HP(y, x, z; o, q, r, s)$ for which $\cl (\{y,z \}) \cap
\cl (\{o,r\})$ is $x'$. A \emph{harmonic matroid} is a matroid such
that each set $\{y, x, z, o, q, r, s\}$ with $\HP(y, x, z; o, q, r,
s)$ gives rise to a harmonic conjugate $x'$ of $x$ with respect to
$y$ and $z$, and, furthermore, $x'$ depends only on $x,y,$ and $z$,
and not on the choice of $o,q, r$ and $s$.

\begin{figure} [htbp]
 \centering
\includegraphics[scale=0.5]{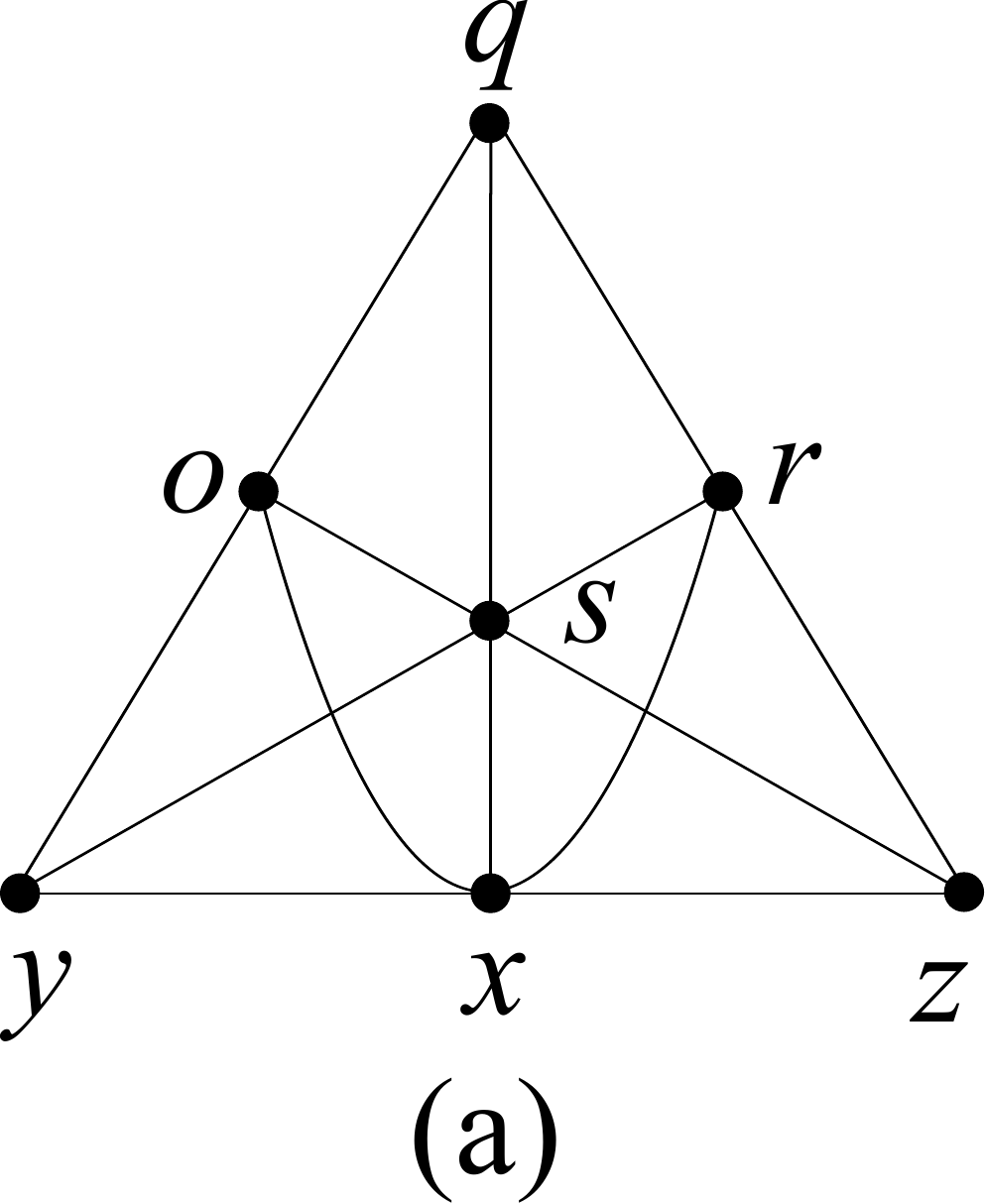} \hspace{2.5cm} \includegraphics[scale=0.5]{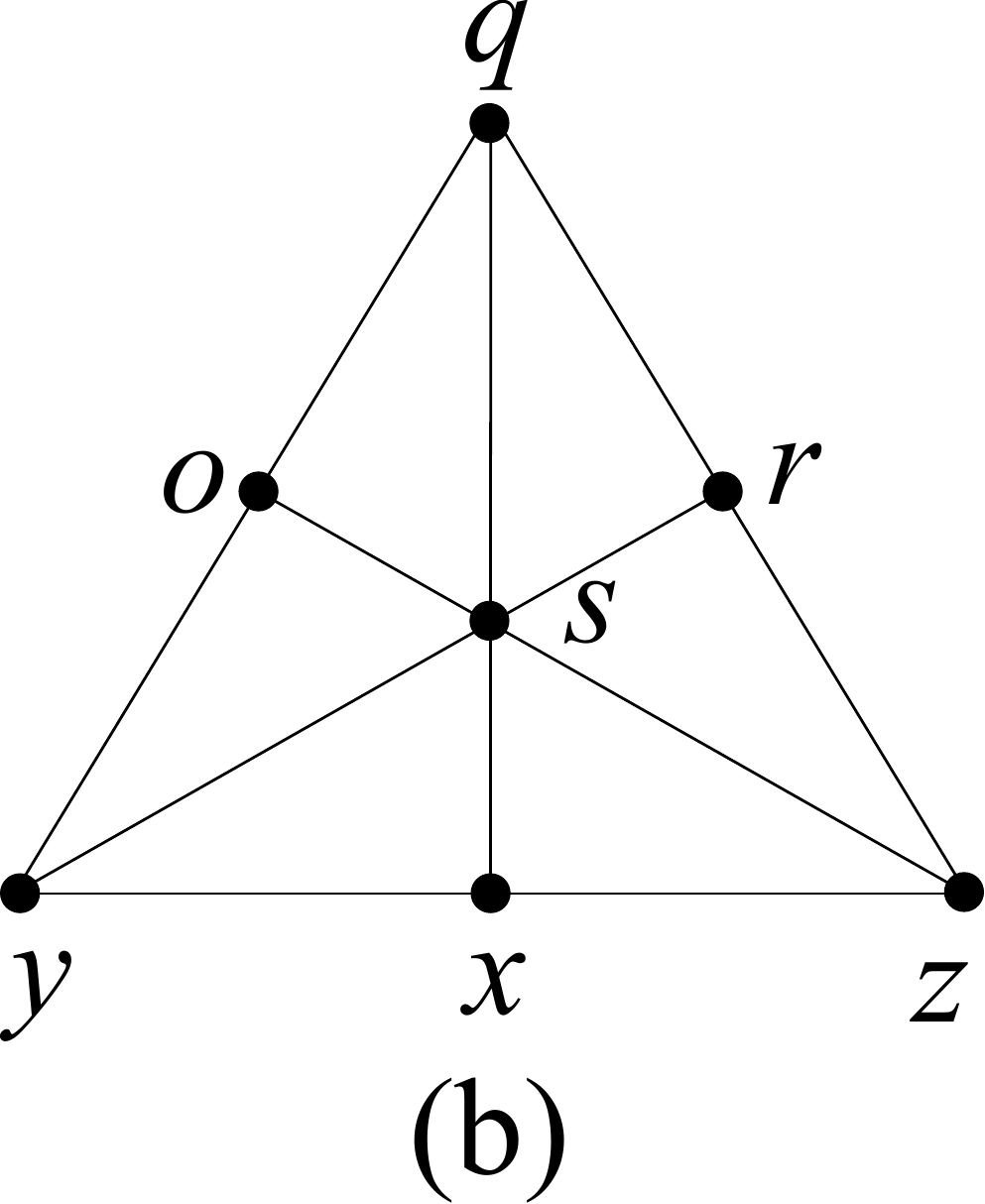}
\caption{$(a)$ The Fano matroid $F_7$. \ \ $(b)$ The non-Fano matroid $F_7^-.$} \label{f1}
\end{figure}

Familiar examples of harmonic matroids include Desarguesian planes
and all projective geometries of rank four or more. Also, a
\emph{little Desargues plane} is a harmonic matroid; this is a
projective plane whose ternary ring is an alternative ring
\cite{st}. Such a plane is also called  an \emph{alternative} or
\emph{Moufang plane} \cite [Corollary 14.2.5] {st}. In projective
planes the definition of harmonic conjugate does not require
uniqueness; it is known that, given three collinear points in any
projective plane, a harmonic conjugate of one point with respect to
the other two points always exists, but it is not necessarily
unique. For a projective plane having no $F_7$ restriction, all
harmonic conjugates in the plane are unique if and only if the plane
is a little Desargues plane {\cite [Theorems 5.7.4 and 5.7.12] {st}.
Thus, a projective plane having no $F_7$ restriction is a harmonic
matroid if and only if it is a little Desargues plane.

Full algebraic matroids are harmonic, as Lindstr\"om \cite{lhc}
proved. Let $F$ and $K$ be two fields such that $F \subseteq K.$
Assume that $F$ and $K$ are algebraically closed and $K$ has finite
transcendence degree over $F$. Then those subfields of $K$ which are
algebraically closed and contain $F$ form a lattice. The lattice of
such subfields, ordered by inclusion, is isomorphic to the lattice
of flats of a matroid. We call this matroid the \emph{full algebraic
matroid} $A(K/F)$. A set $S \subseteq K$ is independent in this
matroid if and only if its elements are algebraically independent
over $F$. A full algebraic matroid $A(K/F)$ of rank 3 consists of
the field $F$ (the flat of transcendence degree zero over $F$),
atoms or points (flats of transcendence degree one over $F$), lines
(flats of transcendence degree two) and a plane, which is the field
$K$ (since $K$ is of transcendence degree 3 over $F$).

An \emph{algebraic representation} (or \emph{embedding}) over a field $F$ of a
matroid $M$ is a mapping $f$ from the elements of $M$ into an
extension field $K$ of $F$, such that a subset $S$ is independent in
$M$ if and only if $f$ is injective and $f(S)$ is algebraically
independent over $F$.

We use a natural notation for points and lines given by the
projective  plane $PG(2,p)$ where $p$ is a prime (following
\cite{st}). So, we use homogeneous coordinates. For the points we
write $[x,y,z]$ and we have $ [x,y,z] = [\gamma x,\gamma y,\gamma
z]$ whenever $ \gamma \not = 0$. For the lines we define
$$\langle a,b,c \rangle_p =: \{ [x,y,z] : ax + by + cz = 0 \}$$
and we have $\langle a,b,c \rangle_p = \langle \gamma a,\gamma
b,\gamma c \rangle_p$ whenever $\gamma \not = 0$. Here $a, b, c, x,
y ,z \in \mathbb Z_p$ and not all of $a,b,c$ or $x,y,z$ can be zero.

Denote by $L_p$ the restriction of $PG(2,p)$, to three concurrent lines. An explicit
description of this matroid is as follows. The ground set is
$$ E := \big \{ [0,i,1],[1,i,1], [1,i,0]: i \in \mathbb Z_p \big \} \cup \big \{ [0,1,0]  \big \}.$$
The dependent lines are given by:

$\langle 1, 0, 0 \rangle_p := \big\{ [ 0,i,1]: i \in {\mathbb Z_p}
\big\} \cup \big\{ [0,1,0] \big\}$,

$ \langle 1, 0, -1 \rangle_p :=  \big\{ [ 1,i,1]: i \in {\mathbb Z_p}
\big\} \cup \big\{[0,1,0] \big\}$,

$ \langle  0, 0, 1 \rangle_p :=  \big\{ [1,i,0]: i \in {\mathbb Z_p}
\big\} \cup \big\{ [0,1,0] \big\}$,

$ \langle j,-1, i \rangle _p :=  \big\{  [0,i,1], [1,i+j,1], [1,j,0]
\big\} $ for $ i,j \in {\mathbb Z_p}$.

\section {Properties of harmonic conjugation} \label{harmonicconjugation}

In this section we generalize to harmonic matroids some elementary
properties of the harmonic conjugate that hold in a projective
geometry. (Lindstr\"om \cite {lhc} proved Proposition
\ref{conjugates} (\emph{i}) for full algebraic matroids.)

\begin{lem} \label{tconjugate}
If $ \HP(y, x, z; o, q, r, s) $ and $\Hc(y,z;x,x')$ in a harmonic
matroid, then $\Hc(y,r;s,t)$ for some point $t$ that is collinear
with $ q $ and $ x'$.
\end{lem}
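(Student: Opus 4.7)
My plan is to produce $t$ as the harmonic conjugate arising from a second $\HP$-configuration, obtained from the given one by replacing $x$ with $x'$. Concretely, I set
$$t := \cl(\{y, r\}) \cap \cl(\{q, x'\})$$
and verify that $\HP(y, s, r;\, q, o, x', z)$ holds in $M$, where $y, s, r$ play the roles of $y, x, z$ and $q, o, x', z$ play the roles of $o, q, r, s$. Granted this, the harmonic-matroid axiom immediately gives $\Hc(y, r; s, t)$, and the membership $t \in \cl(\{q, x'\})$ is exactly the required collinearity of $t$ with $q$ and $x'$.

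To verify the new $\HP$-configuration, I first list the lines of $M$ forced by the hypotheses. The configuration $\HP(y, x, z; o, q, r, s)$ supplies the base line $\{y, x, z\}$, the four quadrangle sides $\{y, o, q\}$, $\{y, r, s\}$, $\{z, o, s\}$, $\{z, q, r\}$, and the line $\{x, q, s\}$; the triple $\{x, o, r\}$ is a line (Fano case) or an independent triple (non-Fano case). The defining formula $x' = \cl(\{y, z\}) \cap \cl(\{o, r\})$ contributes two more lines of $M$, namely $\{y, z, x'\}$ and $\{o, r, x'\}$. Now consider $P := \{y, s, r, q, o, x', z\}$. Among its triples, the six sets $\{y, s, r\}$, $\{y, o, q\}$, $\{y, z, x'\}$, $\{s, z, o\}$, $\{r, z, q\}$, $\{r, x', o\}$ are all lines of $M$ by the list just compiled; a direct count shows they cover exactly $18$ of the $21$ pairs of $P$, and the three uncovered pairs $\{s, q\}$, $\{s, x'\}$, $\{q, x'\}$ all lie inside the triple $\{s, q, x'\}$. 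Hence $M|P$ is either $F_7$ or $F_7^-$ depending on whether $\{s, q, x'\}$ is dependent or independent in $M$, and in either case this yields $\HP(y, s, r; q, o, x', z)$. In the Fano subcase the two intersecting lines both pass through $s$, so $t = s$, which is the correct degenerate answer.

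The step I expect to be most delicate is checking that $M|P$ genuinely is $F_7$ or $F_7^-$: one has to confirm that the seven points of $P$ are distinct, that $P$ has rank three in $M$, and that the six triples listed above are bona fide lines of $M$. The only collinearity not immediately inherited from the original $\HP$ is $\{o, r, x'\}$, which is forced by $x' \in \cl(\{o, r\})$. The main distinctness issue is $x' \neq y, z$, and this follows because neither $y$ nor $z$ can lie on $\cl(\{o, r\})$: collinearity of $y, o, r$, for instance, would force $r \in \cl(\{y, o\}) = \{y, o, q\}$, contradicting the structure of the original $\HP$-configuration. Disjointness of $x'$ from $\{o, q, r, s\}$ is automatic since those points lie off the base line while $x'$ lies on it.
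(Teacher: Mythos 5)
Your proposal is correct and takes essentially the same route as the paper: the paper's proof simply asserts the relabelled configuration $\HP(y,s,r;x',z,q,o)$ (the same configuration as your $\HP(y,s,r;q,o,x',z)$, since swapping the two pairs of quadrangle vertices preserves the line structure) and then invokes the harmonic-matroid axiom to obtain $t\in\cl(\{y,r\})\cap\cl(\{q,x'\})$, exactly as you do. Your write-up is in fact more detailed than the paper's one-line argument, supplying the distinctness and collinearity checks that the paper leaves implicit.
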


\begin{proof}
Since $\Hc(y,z;x,x')$, the points $o, r, x'$ are collinear. Thus,
$\HP(y, s, r; x', z, q, o)$, so there is a point $t$ with
$\Hc(y,r;s,t)$. Therefore $\{ y, s, r, t \}$ and $ \{ q, t, x' \}$
are collinear sets.
\end{proof}

\begin{prop} \label{conjugates} In a harmonic matroid,
\begin{enumerate}
\item [] (i) $\Hc(y,z ; x,x')$ if and only if $\Hc(y,z ; x', x )$,
\item [] (ii) if $x \not = x'$ and $y \not = z$ then  $\Hc(y,z;x,x')$  if and only if $\Hc(x,x'; y,z)$.
\end{enumerate}
\end{prop}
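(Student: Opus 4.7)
The plan is to prove (i) by a direct relabeling of the original $\HP$ configuration, and to prove (ii) by constructing a new $\HP$ configuration with auxiliary points supplied by Lemma~\ref{tconjugate}. Because the statement in (ii) is symmetric in the pairs $(y,z)$ and $(x,x')$, it is enough to prove the forward implication; the converse then follows by swapping the roles of the two pairs.

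For (i), given $\HP(y,x,z;o,q,r,s)$ with $x'=\cl(\{y,z\})\cap\cl(\{o,r\})$, I would verify that $\HP(y,x',z;q,o,s,r)$ also holds. The four quadrangle-side lines $\{y,o,q\}$, $\{y,r,s\}$, $\{z,o,s\}$, $\{z,q,r\}$ are already present in the original restriction, the base line $\{y,x',z\}$ is a line because $x'\in\cl(\{y,z\})$, and the sixth required collinearity $\{x',o,r\}$ is exactly the definition of $x'$. Computing the harmonic conjugate from this relabeled configuration then gives $\cl(\{y,z\})\cap\cl(\{q,s\})=x$, because $\{x,q,s\}$ is a line in the original $F_7$ or $F_7^-$ restriction; hence $\Hc(y,z;x',x)$.

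For the forward direction of (ii), I would first apply Lemma~\ref{tconjugate} to obtain a point $t\in\cl(\{y,r\})\cap\cl(\{q,x'\})$; the first membership places $t$ on the original line $\cl(\{y,r,s\})$, so $\{y,s,t\}$ is collinear. I would then define $c:=\cl(\{q,r\})\cap\cl(\{s,x'\})$, which is well-defined because the whole configuration lies in a rank-$3$ flat of $M$. The candidate is $\HP(x,y,x';q,s,c,t)$: five of the six required lines, namely $\{x,y,x'\}$, $\{x,q,s\}$, $\{y,s,t\}$, $\{x',q,t\}$, and $\{x',s,c\}$, are immediate from the constructions. The harmonic conjugate computed from this configuration would then be $\cl(\{x,x'\})\cap\cl(\{q,c\})=\cl(\{y,z\})\cap\cl(\{q,r\})=z$, using $c\in\cl(\{q,r\})$ together with the original line $\{z,q,r\}$, which is exactly $\Hc(x,x';y,z)$.

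The main obstacle is the last remaining collinearity $\{x,c,t\}$, which is what certifies that $\{x,y,x',q,s,c,t\}$ really does realize an $F_7$ or $F_7^-$ restriction in $M$. I would try to establish it by reapplying Lemma~\ref{tconjugate} to the secondary configuration $\HP(y,s,r;x',z,q,o)$ produced inside that lemma's proof; this gives a further harmonic conjugate that on the one hand is forced to lie on $\cl(\{c,t\})$ and on the other must coincide with $x$ by the uniqueness of harmonic conjugates in a harmonic matroid. This is the deepest step in the argument, and it is where the hypothesis that $M$ is harmonic (rather than merely possessing Fano and non-Fano restrictions) is doing the real work.
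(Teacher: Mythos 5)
Part (i) of your proposal is correct and is essentially the paper's own argument: the relabelled configuration $\HP(y,x',z;q,o,s,r)$, whose conjugate is $\cl(\{y,z\})\cap\cl(\{q,s\})=x$; your observation that the converse of (ii) follows by symmetry also matches the paper, which only proves the forward implication.

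Part (ii), however, has two genuine gaps. First, you introduce $c:=\cl(\{q,r\})\cap\cl(\{s,x'\})$ and justify its existence by saying the configuration lies in a rank-$3$ flat. In a matroid, even a harmonic one, two coplanar lines need not meet: two rank-$2$ flats inside a plane can have empty intersection (this is precisely why the harmonic conjugate must be postulated rather than constructed in this setting). New points may only be produced as harmonic conjugates of explicit $\HP$ configurations. Your $c$ can in fact be obtained legitimately, since Lemma \ref{tecnico}(i) provides a point $q_2$ with $\Hc(z,r;q,q_2)$ and $q_2\in\cl(\{s,x'\})$, which is exactly the point you want; but your proposal does not invoke this and instead asserts the intersection outright. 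Second, and more seriously, the collinearity $\{x,c,t\}$, which you correctly identify as the crux, is not established, and the route you sketch does not deliver it: applying Lemma \ref{tconjugate} to $\HP(y,s,r;x',z,q,o)$ produces a harmonic conjugate of $o$ with respect to $y$ and $q$, i.e.\ a point of $\cl(\{y,q\})$ collinear with $z$ and $t$; in the coordinate model this point is neither $x$ nor on $\cl(\{c,t\})$, so no uniqueness argument can identify it with $x$. (The collinearity itself is true in the Desarguesian model, so your target configuration is the right one, but it is exactly the statement that needs a harmonic-matroid proof.) The paper circumvents this difficulty with a different chain: from $\HP(y,q,o;x',z,s,r)$ it gets $q'$ with $\Hc(y,o;q,q')$ and $\{x',s,q'\}$ collinear; then the two configurations $\HP(q,t,x';o,y,z,r)$ and $\HP(q,t,x';q',y,x,s)$, together with uniqueness of the harmonic conjugate, give a single point $w$ lying on both $\cl(\{q',x\})$ and $\cl(\{s,o,z\})$; finally $\Hc(x,x';y,z)$ is read off from $\HP(x,y,x';s,q,w,q')$, whose conjugate is $\cl(\{x,x'\})\cap\cl(\{s,w\})=z$. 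Some such double-configuration uniqueness step, certifying the one incidence you cannot get for free, is what your argument is missing; as written, part (ii) does not go through.
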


\begin{proof} If $\Hc(y,z ; x,x')$, then there are
points $o,q,r,s$  with $ \HP(y, x, z; o, q, r, s) $ and  with $o ,r,
x'$ collinear. Thus, $ \HP(y, x', z; q, o, s, r) $, so there is a
point $x''$ with $H(y,z;x',x'')$. The point $x''$ being the
intersection of the lines $\cl(\{q,s \})$ and $\cl (\{y,z \})$, is
$x$, so $\Hc(y,z ; x',x)$, proving part $(i)$.

If $\Hc(y,z;x,x')$, then there are points $o,q,r,s $ with $\HP(y, x,
z; o, q, r, s) $ and  with $o ,r, x'$ collinear. Furthermore by
Lemma \ref{tconjugate} there is a point $t$ with $\{y,s,r,t \}$  and
$\{q,x',t \}$ collinear sets. Thus, $\HP(y, q, o; x', z, s, r)$, so
there is a point $q'$ with $\Hc(y,o;q,q')$ and with $x',s,q'$
collinear. Therefore, $ \HP(q, t, x'; o, y, z, r ) $ and  $ \HP(q,
t, x'; q', y, x, s) $, these and the uniqueness of the harmonic
conjugate imply that there is a point $w$ with $\Hc(q,x';t,w)$ and
with $ \{ q', x, w \}$ and  $\{s, o, z, w\} $ collinear. Thus, $
\HP(x, y, x'; s, q, w, q') $, so there is a point $y'$ with
$\Hc(x,x';y,y')$. The point $y'$, being the intersection of the
lines $\cl(\{o, s \})$ and $\cl (\{x,x' \})$, is $z$, so
$\Hc(x,x';y,z)$, proving part $(ii)$.
\end{proof}

\begin{lem} \label{tecnico}
If $\HP (y, x, z; o, q, r, s)$ and $\Hc(y,z ; x,x')$ in a harmonic
matroid, then

$(i)$ $\Hc(y,o ; q,q_1)$ and $\Hc(z,r ; q,q_2)$ for some points
$q_1$ and $q_2$ in $\cl \big(\{ s, x'\} \big)$,

$(ii)$ $\Hc(o,r ; u, x')$ for some point $u$ in $\cl \big ( \{ q, x
\}\big )$.
\end{lem}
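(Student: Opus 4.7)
For part~(i), I would exhibit $q_1, q_2$ as explicit line intersections and then certify the harmonic relations via two auxiliary $\HP$ configurations built on the same quadrangle $\{x',z,s,r\}$. The first is $\HP(y,q,o;x',z,s,r)$ (already invoked inside the proof of Proposition~\ref{conjugates}(ii)), whose five immediate side-lines are $\{y,x',z\}$ and $\{o,x',r\}$ (from $x' \in \cl(\{y,z\}) \cap \cl(\{o,r\})$), together with $\{y,s,r\}$, $\{o,z,s\}$, $\{q,z,r\}$ inherited from the given $\HP$; its sixth side $\{q,x',s\}$ is precisely what Lemma~\ref{tconjugate} (together with the uniqueness clause in the definition of a harmonic matroid) is designed to furnish. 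Once this configuration is in place, the defining formula for the harmonic conjugate pins down $q_1 = \cl(\{y,o\}) \cap \cl(\{x',s\})$, which lies on $\cl(\{s,x'\})$ by construction. The second half of (i) follows by the involution $(y,o) \leftrightarrow (z,r)$ that preserves the original $\HP$ and, by Proposition~\ref{conjugates}(i), preserves $\Hc(y,z;x,x')$; applying the same argument to $\HP(z,q,r;x',y,s,o)$ then yields $q_2 = \cl(\{z,r\}) \cap \cl(\{x',s\})$ with the required properties.

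For part~(ii), define
\[
u := \cl(\{y,z\}) \cap \cl(\{q,s\}).
\]
Since $\{q,x,s\}$ is a line of the given $\HP$, we have $\cl(\{q,s\}) = \cl(\{q,x\})$, and therefore $u \in \cl(\{q,x\})$ as demanded. The candidate configuration is $\HP(o,u,r;y,q,z,s)$: the four inherited side-lines are $\{o,y,q\}$, $\{o,z,s\}$, $\{r,y,s\}$, $\{r,q,z\}$, while the remaining two sides $\{u,y,z\}$ and $\{u,q,s\}$ are lines by the very definition of $u$; all six are then immediate. Granting this, the defining formula for the harmonic conjugate yields $\cl(\{o,r\}) \cap \cl(\{y,z\}) = x'$ as the conjugate of $u$ with respect to $o,r$, so $\Hc(o,r;u,x')$.

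The main obstacle in both parts is certifying that each seven-element restriction really is a Fano or non-Fano matroid in the ambient harmonic matroid, ruling out unintended dependencies in the process. For part~(i) this comes down to producing the collinearity $\{q,x',s\}$, which is where Lemma~\ref{tconjugate} carries the load; for part~(ii) the definition of $u$ by intersection makes the new side-lines automatic, so the task reduces to excluding extraneous coincidences between $u$ and the other six points, which is forced by uniqueness of harmonic conjugates.
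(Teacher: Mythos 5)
Your part~(i) follows the paper's route---the configuration $\HP(y,q,o;x',z,s,r)$ produces $q_1$, and the symmetry $y\leftrightarrow z$, $o\leftrightarrow r$ produces $q_2$ (the paper leaves that half implicit)---but your accounting of which collinearities certify an $\HP$ is off. In $\HP(Y,X,Z;O,Q,R,S)$ the six mandatory three-point lines are $\{Y,X,Z\}$, $\{Y,O,Q\}$, $\{Y,R,S\}$, $\{Z,O,S\}$, $\{Z,Q,R\}$, $\{X,Q,S\}$; the triple $\{X,O,R\}$ is the optional seventh line that distinguishes $F_7$ from $F_7^-$. So for $\HP(y,q,o;x',z,s,r)$ the ``missing sixth side'' you name, $\{q,x',s\}$, is precisely the Fano line: it is not required, it is generally false, and Lemma~\ref{tconjugate} does not furnish it in any case (that lemma produces a new point $t$ with $\{q,t,x'\}$ collinear, and $t=s$ only in the Fano situation). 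The sixth mandatory side is simply the original line $\{y,q,o\}$, so nothing beyond the hypotheses is needed there; with that correction your (i) is sound and agrees with the paper.

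The same misreading genuinely breaks part~(ii). Your $u:=\cl(\{y,z\})\cap\cl(\{q,s\})$ is nothing but $x$, since both of these rank-two flats contain $x$ and are distinct; your candidate $\HP(o,u,r;y,q,z,s)$ then requires its first triple $\{o,x,r\}$ to be collinear, which is exactly the Fano line of the given configuration and fails whenever that configuration is $F_7^-$ (indeed $\Hc(o,r;x,x')$ is then not even well formed, because $x\notin\cl(\{o,r\})$). The point the lemma wants is the \emph{other} diagonal point, lying on $\cl(\{o,r\})$ and on $\cl(\{q,s\})=\cl(\{q,x\})$, and its existence cannot be had by naming an intersection: two coplanar lines in a matroid need not meet, which is exactly why the paper manufactures $u$ as a harmonic conjugate. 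The paper's route is to take $q_1$ from part~(i), note that $\{o,q_1,y\}$ and $\{q_1,x',s\}$ are now lines so that $\HP(o,q_1,y;r,x',z,s)$ holds, and apply part~(i) to this new configuration to obtain $u$ with $\Hc(o,r;x',u)$ and $\{s,u,q\}$ collinear; Proposition~\ref{conjugates}(i) together with $\cl(\{s,q\})=\cl(\{q,x\})$ then gives the stated form. (A one-step alternative would be the configuration $\HP(o,x',r;q,y,s,z)$, whose conjugate of $x'$ with respect to $o,r$ lies on $\cl(\{q,s\})$, but some such configuration, invoking the harmonic-matroid axiom, is indispensable---you cannot simply define $u$ as a line intersection.)
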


\begin{proof}
Since  $\HP (y, x, z; o, q, r, s)$ and $\Hc(y,z ; x,x')$, $\HP (y,
q, o; x', z, s, r)$. Thus, there is a point $q_1$ with $\Hc(y,o;
q,q_1)$ and with $\{q_1,s,x'\}$ collinear, proving part $(i)$.

From the previous paragraph we obtain $\HP (o, q_1, y; r, x', z,
s)$. Hence, by part $(i)$ there is a point $u$ with  $\Hc(o,r ;
x',u)$ and with $\{s,u,q\}$ collinear.
\end{proof}

\section {Projective planes}

In this section we introduce the concept of \emph{harmonic closure}
and state its basic properties (Lemma \ref{closure}). Using this
notion we show that given an embedding of $L_p$ in a harmonic
matroid $M$, the harmonic closure of $L_p $ is a Desarguesian
projective plane of order $p$. This is a constructive geometric
proof. Indeed, using the concept of harmonic conjugation, we extend
each line of the form $\langle j,-1, i \rangle _p$ to get a set in
$M$ with $p+1$ elements. The set formed by the union of all these
extended sets in $M$ and the point $[0,1,0]$ will be the set of
points of a projective plane in $M$. This method seems to work fully
only for prime orders and therefore depends on coordinates and
indeed on primality.

Let $\mathcal {P} (M)$ be the power set of the ground set of a
harmonic matroid $M$. We define $ h \colon \mathcal {P} (M)
\rightarrow  \mathcal {P}(M) $ by $ h(S) = S \cup \{ x \in M \colon
\Hc(a,c ; b, x) \text { for some } a, b, c \in S \}, $ $h^n (S) = h
(h^{n-1} (S))$, and $ h^ \infty (S) = \bigcup _{n\ge 0}h^n(S).$ We
call  $h^ \infty (S)$ the \emph{harmonic closure} of $S$ in $M$.
Note that $h^ \infty $ is an abstract closure operator. If $ h^
\infty (S) = S$, then we say $S$ is \emph{harmonically closed}.

For Lemmas \ref{LPP} to \ref{cp2}  we assume that $p$ is an odd
prime and that $L_p$ is embedded in a harmonic matroid $M$. For $i,j
\in \mathbb Z_p$ define
$$\langle j , -1,i \rangle := h^{\infty}\big( \langle j ,-1, i \rangle_p
\big).$$ This defines a set in $M$ from a set in $L_p$.  The
notation for points in $L_p$ is carried over from the projective
plane, but note that the projective points $[a,b,c]$ that are not in
$L_p$ are not in $M$.  Instead, we will construct points in $M$ with
those same names, by harmonic conjugation, and show that they have
the same incidence structure in $M$ as the points of the same name
in the projective plane. That will justify the use of the projective
names.

\begin{lem} \label{closure} For a set $S$ in a harmonic matroid,
$S \subseteq h^{\infty} (S) \subseteq \cl (S)$ and $\rk (S ) = \rk
(h^{\infty} (S)).$
\end{lem}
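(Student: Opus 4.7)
The plan is to unwind the definitions and let the closure operator of the matroid do the work. The inclusion $S \subseteq h^\infty(S)$ is immediate from the construction: by definition $h(T) \supseteq T$ for every $T$, so $S \subseteq h(S) \subseteq h^2(S) \subseteq \cdots$, and the union $h^\infty(S)$ contains $S$.

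The substantive step is the second inclusion, and the key observation is that a harmonic conjugate lies on the original line. Concretely, if $\Hc(a,c;b,x)$ with $a,b,c \in S$, then by the very definition of harmonic conjugate, $x$ is the intersection of $\cl(\{a,c\})$ with another line determined by the associated $\HP$ configuration; in particular $x \in \cl(\{a,c\}) \subseteq \cl(S)$. Hence $h(S) \subseteq \cl(S)$. Since $\cl(S) = \cl(\cl(S))$, the same argument applied to $T = h(S) \subseteq \cl(S)$ yields $h^2(S) \subseteq \cl(\cl(S)) = \cl(S)$, and by induction on $n$ we get $h^n(S) \subseteq \cl(S)$ for every $n \geq 0$. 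Taking the union gives $h^\infty(S) \subseteq \cl(S)$.

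The rank equality is then a one-line consequence. Rank is monotone with respect to set inclusion and is invariant under matroid closure, so the chain $S \subseteq h^\infty(S) \subseteq \cl(S)$ forces
\[
\rk(S) \leq \rk(h^\infty(S)) \leq \rk(\cl(S)) = \rk(S),
\]
giving $\rk(S) = \rk(h^\infty(S))$.

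There is essentially no obstacle here: the only content is recognising that harmonic conjugation never leaves the flat $\cl(\{a,c\})$ generated by the two reference points, which is built into the definition of $\Hc$. The induction on $n$ and the rank squeeze are routine. Nothing special about the harmonic matroid hypothesis is used beyond the fact that $\Hc$ is well defined on $M$.
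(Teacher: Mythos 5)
Your proof is correct: the paper states this lemma without proof, treating it as routine, and your argument is exactly the intended one — by definition the conjugate $x$ in $\Hc(a,c;b,x)$ lies in $\cl(\{a,c\})\subseteq\cl(S)$, so induction with idempotence of $\cl$ gives $h^n(S)\subseteq\cl(S)$ for all $n$, and the rank equality follows by the monotonicity/closure-invariance squeeze. Nothing is missing.
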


\begin{thm} \label{PP}
For each embedding of $L_p$ in a harmonic matroid $M$, $h^
\infty(L_p)$ is a Desarguesian projective plane of order $p$ in $M$.
\end{thm}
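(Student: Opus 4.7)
The plan is to construct $h^\infty(L_p)$ explicitly, realizing inside $M$ all $p^2+p+1$ points of $PG(2,p)$, and then to verify that its incidence structure matches that of $PG(2,p)$. I would proceed in three stages: extend each short transversal of $L_p$ by harmonic conjugation, label the newly produced points by projective coordinates, and then verify the coherence of that labelling across different transversals.

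For the first stage I fix a transversal $\ell = \langle j,-1,i\rangle_p = \{[0,i,1],\,[1,i+j,1],\,[1,j,0]\}$ and select auxiliary points on the three concurrent lines of $L_p$ so that the three given points of $\ell$ appear in an $\HP$-configuration. The resulting harmonic conjugate is a fourth collinear point on $\ell$, which lies in $\cl(\ell)$ by Lemma~\ref{closure}. Iterating this construction $p-2$ times, with suitable auxiliary choices, yields $p+1$ collinear points on $\ell$; I would label each by the name of the corresponding projective point $[a,b,c] \in PG(2,p)$ lying on the projective line through $\ell$. This gives a candidate bijection between $h^\infty(L_p)$ and the point set of $PG(2,p)$.

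The main obstacle is the coherence of this coordinatization. A projective point $[a,b,c]$ off $L_p$ lies on $p$ different transversals $\langle j,-1,i\rangle_p$, and each of those transversals, when extended, independently produces a matroid element that we have named $[a,b,c]$; I must show that all $p$ of these elements are the same element of $M$. The argument should reduce the identity of two candidate realizations to the uniqueness clause in the definition of a harmonic matroid, by building an explicit chain of $\HP$-configurations linking the two constructions through shared auxiliary points of $L_p$ and invoking the symmetries of Proposition~\ref{conjugates} together with the collinearities furnished by Lemma~\ref{tconjugate} and Lemma~\ref{tecnico}. In effect this stage shows that harmonic conjugation faithfully simulates the field arithmetic of $\mathbb{Z}_p$ along transversals, which is precisely why primality of $p$ is essential: the iterated harmonic-conjugate process generates the prime subfield of the emerging coordinate ring and closes up exactly when $p$ is prime.

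Once coherence is established, $h^\infty(L_p)$ is in bijection with the points of $PG(2,p)$, and each of the $p^2+p+1$ projective lines is realised as a rank-two flat of $M$ — either one of the three original concurrent lines of $L_p$, one of the $p-2$ further lines through $[0,1,0]$ that appear during the harmonic-closure process, or one of the $p^2$ extended transversals. This exhibits a projective plane of order $p$ inside $M$, and Desarguesianness is inherited from $PG(2,p)$, since the latter is coordinatized by the field $\mathbb{Z}_p$ and therefore satisfies the Desargues configuration.
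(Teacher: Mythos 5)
Your outline follows the same strategy as the paper (extend the transversals of $L_p$ by harmonic conjugation, name the new points by projective coordinates, and check incidences), and you correctly identify coherence of the labelling as the crux. But at exactly that point the proposal stops being a proof: you say the identification of the $p$ candidate realizations of a point $[a,b,c]$ ``should reduce to the uniqueness clause \dots by building an explicit chain of $\HP$-configurations,'' without exhibiting any such configurations or the order in which the points must be constructed. This is precisely the content of Lemma~\ref{LPP}, which the paper calls the hardest part: one must induct on the third coordinate $k$, producing the point $[1,j-ki,-k]$ from an explicit $\HP$-configuration built out of the already-constructed levels $k-1$ and $k-2$ together with points of $L_p$ (display~(\ref{eq3})), and then use Lemma~\ref{tecnico}(ii) to show the conjugate obtained is independent of the auxiliary parameter $s$, which simultaneously yields the intersection property $\bigcap_t \langle j+kt,-1,i+t\rangle=\{[1,j-ki,-k]\}$ (the coherence you need), the new collinearities, and the fact that the points are genuinely new. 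Nothing in your sketch specifies which auxiliary points to take, why an $\HP$-configuration (rather than an $F_7$ restriction or a degenerate one) actually arises at each step, or how the lines through $[0,1,0]$ other than the three original ones come into existence; your remark that the process ``generates the prime subfield'' is a heuristic, not an argument, and the role of primality is never actually used.

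A second gap: even granting the construction, you only show that the constructed point set $P$ is contained in $h^\infty(L_p)$ (and you would still need the analogue of Lemma~\ref{cp2} for the lines through $[0,1,0]$). To conclude that $h^\infty(L_p)$ \emph{is} the plane you must also show the reverse containment, i.e.\ that $P$ is harmonically closed in $M$: for any three collinear points of $P$ and any $\HP$-configuration on them in $M$, the resulting conjugate already lies in $P$. The paper does this in Proposition~\ref{LPP3}, by producing inside the plane an $\HP$-configuration for the same triple and invoking uniqueness of the harmonic conjugate in $M$. Your proposal never addresses why the harmonic closure does not keep producing points beyond the $p^2+p+1$ you have named, so the claimed bijection between $h^\infty(L_p)$ and $PG(2,p)$ is unsupported. (A minor further slip: the extended transversals are collinear subsets of $M$, not necessarily rank-two flats of $M$.)
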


The proof for $p \not = 2$ is based on Lemma \ref{LPP} and
Propositions \ref{LPP2} and \ref{LPP3}.
 In Lemmas \ref{LPP}, \ref {cp} and \ref{LPP3}
we show that $\langle x,y,z \rangle$ is, in fact, a collinear set.
Lemma \ref{LPP} is the
hardest part of the proof. It is proved by building lines, repeating
the harmonic conjugation until all the affine points are completely
constructed.

\begin{lem} \label{LPP}
For each pair $s, k$ of elements of $\mathbb Z_p$, there is a unique
point $[1, s,k]$ in $M$ such that:
\begin{align*}
(i) \   & \ \text{ for each } i,j \in \mathbb Z_p, \displaystyle
{\bigcap _{ t \in \mathbb Z_p }
            \langle j + k t, -1, i + t \rangle =  \big \{ [1,j-ki,-k] \big\}, } \\
(ii) \  & \ \text{ for each }i,j, t \in \mathbb Z_p,  \big \{
[0,i+t,1], [1,j+kt,0],[1, j-ki,-k]\big \} \text{ is a collinear set,} \\
(iii)  \ & \ \text{ the points } [1,s,-k], [0,t,1], [0,1,0] \text{ for } s,t \in \mathbb Z_p , \text{ are all different,}\\
(iv) \ & \ \text{ for each } k \in \mathbb Z_p, \big\{ [1,t,-k] : t \in \mathbb Z_p \big\}
            \cup \big\{ [0,1,0] \big\} \text{ is a collinear set.}
\end{align*}
\end{lem}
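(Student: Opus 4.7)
The plan is to construct the points $[1, s, -k]$ by iterated harmonic conjugation on the lines $\langle j, -1, i \rangle$, working upward from the existing points of $L_p$, and to verify properties (i)--(iv) at each stage by induction on a complexity parameter for $-k$.

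The base cases $k = 0$ and $k = -1$ (where the candidate points $[1, j, 0]$ and $[1, j+i, 1]$ already lie in the embedded copy of $L_p$) are handled by direct inspection of the dependent lines listed in Section~2: property (ii) for $k \in \{0, -1\}$ is precisely the defining collinearity of $\langle j, -1, i \rangle_p$ or $\langle 0, 0, 1 \rangle_p$; property (iv) reduces to the lines $\langle 0, 0, 1 \rangle_p$ and $\langle 1, 0, -1 \rangle_p$; and properties (i) and (iii) follow from the distinctness of points in $L_p$.

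For the inductive step, fix a new value of $k$ and construct the point $[1, j-ki, -k]$ on some line $\langle j, -1, i \rangle$ as a harmonic conjugate of three already-constructed collinear points on that line. For the first iteration, take the harmonic conjugate of $[1, i+j, 1]$ with respect to $[0, i, 1]$ and $[1, j, 0]$; at later levels, conjugate using previously constructed points on $\langle j, -1, i \rangle$. Because $M$ is a harmonic matroid, each conjugate exists and is unique. The crucial claim is \emph{cross-line consistency}: if $(j', i')$ satisfies $j' - ki' = j - ki$, equivalently $(j', i') = (j + k\tau, i + \tau)$ for some $\tau$, then the conjugate built on $\langle j', -1, i' \rangle$ coincides with the one built on $\langle j, -1, i \rangle$. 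I would prove this by exhibiting a chain of auxiliary harmonic configurations, supplied by Lemmas \ref{tconjugate} and \ref{tecnico}, linking the two candidates to a common intermediate point, and invoking uniqueness of the harmonic conjugate at each link. Iterating the construction exhausts all residues $k \in \mathbb{Z}_p$, which is possible because $p$ is prime.

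Once cross-line consistency is established, property (ii) holds by construction (each line $\langle j + kt, -1, i + t \rangle$ of the pencil contains the constructed point); the containment half of (i) follows, and the reverse (no other common point in the pencil) is a rank computation using Lemma \ref{closure}. Property (iii) is a direct distinctness check from rank considerations. Property (iv) --- that $\{[1, t, -k] : t \in \mathbb{Z}_p\} \cup \{[0, 1, 0]\}$ is a collinear set --- is the most delicate and is handled by showing that any two of the new points together with $[0, 1, 0]$ lie on a common line, obtained by comparing lines across two pencils and applying uniqueness. The principal obstacle throughout is the cross-line consistency, where the defining property of a harmonic matroid (uniqueness of the conjugate) must be applied repeatedly via the auxiliary collinearities supplied by Lemmas \ref{tconjugate} and \ref{tecnico}.
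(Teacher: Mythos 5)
There is a genuine gap, and it sits exactly where the real work of the paper's proof lies. In a harmonic matroid, a harmonic conjugate of three collinear points is \emph{not} automatically available: by the definition in Section~2, a conjugate exists only once one exhibits seven points of $M$ forming an $\HP$-configuration (an $F_7$ or $F_7^-$ restriction), and only then does the axiom give existence and independence of the choice of the auxiliary quadrilateral. Your inductive step says ``construct $[1,j-ki,-k]$ as a harmonic conjugate of three already-constructed collinear points on $\langle j,-1,i\rangle$ \dots\ because $M$ is a harmonic matroid, each conjugate exists and is unique,'' but you never produce the four auxiliary points $o,q,r,s$, and they cannot come from the line $\langle j,-1,i\rangle$ itself. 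Producing them is the crux: the paper builds the configuration (its displayed $\HP$ statement in the proof of Lemma \ref{LPP}) out of points at the two previous levels $k-2$ and $k-1$ lying on \emph{two different lines of the pencil} together with the apex $[0,1,0]$, and verifying that these seven points really form an $F_7$ or $F_7^-$ uses precisely the inductive properties $(ii)$ and $(iv)$ at levels $0$, $k-2$, $k-1$. This is why the induction must be two-step (from $P(k-2)$ and $P(k-1)$ to $P(k)$) and why $(ii)$ and $(iv)$ must be carried along as part of the inductive statement rather than checked afterwards, as your outline suggests.

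The same gap infects your ``cross-line consistency'' claim, which you defer to ``a chain of auxiliary harmonic configurations, supplied by Lemmas \ref{tconjugate} and \ref{tecnico}'' without giving it. In the paper this is a single, specific application of Lemma \ref{tecnico}$(ii)$ to the constructed configuration: the conjugate $x_s$ built with parameter $s$ is shown to be also the conjugate, with respect to the \emph{fixed} pair $[0,i,1]$, $[1,j-(k-1)i,-(k-1)]$, of a point $u$ lying in the intersection of two flats that do not depend on $s$ (one of them being the line through $[0,1,0]$ containing all level-$(k-2)$ points, which exists only because $(iv)$ is part of the induction hypothesis); uniqueness of the conjugate then forces $x_s$ to be independent of $s$. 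Without the explicit $\HP$-configuration neither the existence of the new points nor their independence of the line used to build them is established, so as written the proposal records the intended statements of $(i)$--$(iv)$ but omits the argument that makes them true.
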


\begin{proof}
For $k \in \mathbb Z_p$, let $P(k)$ be the statement: for each pair
$ i,j$ of elements of ${\mathbb Z_p}$ there is a point  $[1,
j-ki,-k]$ such that the properties $(i)$ -- $(iv)$ above hold.

Note that $P(-1)$ and $P(0)$ hold by the definition of $L_p$. We
prove $P(k)$ for $1 \leq k \leq p-2$.

We now suppose that for some fixed $k \in \{1, \dots , p-2\}$,
$P(k')$ holds for  $ 1 \leq k' < k$.

Let $s \in \mathbb Z_p$. From $P(k-2)$ $(ii)$  and $P(k-1)$ $(ii)$
both with $t=0$ and $j+s$ instead of $j$, we deduce that
$$
\big \{[0,i,1], [1, j+s-(k-2)i,-(k-2)],  [1, j+s-(k-1)i,-(k-1)]
\big\}
$$
is collinear. Taking $t=s$ and $j+s$ instead of $j$ in $P(k-2)$
$(ii)$, and $t=s$ in $P(k-1)$ $(ii)$ we deduce that
$$
\big \{[0,i+s,1], [1, j+s-(k-2)i,-(k-2)], [1, j-(k-1)i,-(k-1)]
\big\}
$$
is collinear. Similarly taking $t=s$ and $j+2s$ instead of $j$ in
$P(k-2)$ $(ii)$, and $t=s$ and $j+s$ instead of $j$ in $P(k-1)$
$(ii)$, we deduce that
\begin{equation}\label{col2}
\big \{[0,i+s,1], [1, j+2s-(k-2)i,-(k-2)],  [1, j+s-(k-1)i,-(k-1)]
\big\}
\end{equation}
is collinear. These three sets with $s \not = 0$, and $P(0) (iv)$,
$P(k-2) (iv)$ and $P(k) (iv)$ imply that
\begin{multline}\label{eq3}
\HP \big( [0,i+s,1],[1, j-(k-2)i+2s,-(k-2) ], [1, j-(k-1)i +s,-(k-1)]; \\
[0,i,1 ], [0,1,0], [1, j- (k-1) i,-(k-1) ], [1, j-(k-2)i+s,-(k-2)
]\big).
\end{multline}
Thus, there is a point $x_s$ such that
$$ \Hc \big( [0,i+s,1],[1, j-(k-1)i+ s,-(k-1)];  [1, j-(k-2)i+2s,-(k-2) ], x_s \big).$$
By Lemma \ref {tecnico} (\emph{ii}),
$$ \Hc \big( [0,i,1],[1, j-(k-1)i,-(k-1)];  u, x_s \big),$$
where $u$ belongs to $\cl \big([0,i,1],[1j-(k-1)i,-(k-1)]\big)\cap
\cl\big( [0,1,0],[1, j-(k-2)i+2s,-(k-2) ]\big)$. In particular $x_s$ is
independent of $s$. The point $x_s$ is called $[1,j-ki,-k]$. Thus
\begin{equation}\label{eq5}
\big \{ [0,i+t,1], [1, j-(k-2)i+ 2t,-(k-2)], [1, j-(k-1)i+
t,-(k-1)], [1, j-ki,-k] \big \}
\end{equation}
is collinear. This and $P(k-1)$ $(ii)$ imply that
$$[1, j-ki,-k ] \in \langle (j+t) + (k-1) t, -1,i+t \rangle .$$
\noindent Therefore
\begin{equation}\label{ecu9}
\bigcap_{t \in {\mathbb Z_p}}  \langle j + k t, -1,i+t \rangle =
\big\{ [1, j - k i,-k] \big\},
\end{equation}
thus proving $P(k)$  $(i)$.

By $P(k-1) (ii)$ with $j+t$ instead of $j$, we deduce that
$[1,j+kt,0]$ is collinear with the points in (\ref{eq5}). So,
$P(k)(ii)$ holds.

We now prove $P(k) \ (iii)$. We prove that $[1,j-ki,-k]$ is a new
point. Suppose that this point is not new. Then the point is  either
$[0,i,1]$ or is of the form $[1,j'-si,-s]$ for some $s$ in
$\{-1,0,1, \dots , k-1 \}$.

By the definition of harmonic conjugate $[1, j - k i,-k] \not = [0,i,1]$.

We now suppose that $[1,j-ki,-k] = [1,j'-si,-s]$. First of all,
suppose  that $j= j'$. $P(s) (ii)$ with $t=-i$ and $j'$ instead of
$j$ and  $P(k) (ii)$ with $t=-i$ imply that
$$\big\{[0,0,1], [1,j+k(-i),0], [1,j'+s(-i),0]\big\}$$
is collinear. That is a contradiction because the set is not
collinear in $L_p$.

We suppose that $j \not = j'$. $P(s) (ii)$ with $t=0$ and $j'$
instead of  $j$ and $P(k) (ii)$ with $t=0$  imply that $\big
\{[0,i,1], [1,j',0], [1,j,0] \big\}$ is collinear. That is a
contradiction because the set is not collinear in $L_p$.

Since $[1,j-ki,-k]$ is a new point, $\{ [1,t,-k] : t \in \mathbb Z_p\}$
is a set of new points in $M$. In particular, $ [1,t,-k] \not = [0,1,0]$.

We now prove that the points in the following set are different
$$\big \{ [1,t,-k]: t \in \mathbb Z_p \big \} \cup \big \{ [0,1,0]  \big \}. $$
\noindent Suppose that there are two points $[1,r_1,-k]$ and
$[1,r_2,-k]$ that are equal with $r_1 \not = r_2$. $P(k) \ (ii)$
with $i= 0$, $j=r_1$ and $t=0 $ and $P(k) \ (ii)$ with $i= 0$,
$j=r_2$ and $t=0 $ imply that $\big\{ [0,0,1], [1,r_1,0],[1,r_2,0]
\big\}$ is collinear. That is a contradiction because the set is not
collinear in $L_p$.

We prove $P(k) \ (iv)$. Let $[1,t,-k]$ be a point in $\big \{
[1,t,-k]: t \in \mathbb Z_p \big \} \cup \big \{ [0,1,0]  \big \}$.
We fix $i=0$ and $j=0$ in (\ref{eq3}), and take $s=-t$. By Lemma
\ref{tconjugate},
$$\Hc \big( [0,-t,1],[1,0, -(k-1)] ;[1,-t,-(k-2)], x_t \big)$$
for some point $x_t$ that is collinear with  $[0,1,0]$ and $ [1,
0,-k]$. By taking $i = 0$ and $j= t$ and $-t$ instead of $t$ in
(\ref {eq5}), we obtain that $ x_t = [1,t,-k]$. So, $\big \{
[1,t,-k]: t \in \mathbb Z_p \big \} \cup \big \{ [0,1,0]  \big \}$
is a collinear set. (By uniqueness of the harmonic conjugate we see
that the result does not depend on the choice of $i$ and $j$.) This
completes the proof of $P(k)$.
\end{proof}

We remark that if we repeat the above procedure again to obtain
$P(p-1)$ from $P(p-3)$ and $P(p-2)$, then we obtain the point $[1,
j-(p-1)i,-(p-1)]$ with
\[ \displaystyle { \bigcap _{t \in \mathbb Z_p}  \langle j +(p-1)t, -1,i+t \rangle =
\big\{ [1, j-(p-1)i,-(p-1)]} \big \}.\] By the definition of
$\langle j +(p-1)t, -1,i+t \rangle_p$, the point $[1, j+i,1]$
belongs to $\langle j +(p-1)t, -1,i+t \rangle $. So,
$$\displaystyle { [1, j+i,1] \in \bigcap _{t \in \mathbb Z_p} \langle j +(p-1)t, -1,i+t \rangle }.$$
Thus, $[1, j-(p-1)i,-(p-1)]=
[1, j+i,1]$.

The set $\big\{ [1,t,-k] : t \in \mathbb Z_p \big\} \cup \big\{
[0,1,0] \big\}$  is denoted by $\langle 1, 0, k^{-1} \rangle \text{
if } k \not = 0 $ and by $\langle 0, 0, 1 \rangle \text{ if } k =
0$. From the proof of the previous lemma, part $(iv)$, we deduce
also that $\langle j, -1,i \rangle \cup \langle 1, 0,k^{-1} \rangle$
has rank $3$.

\begin{lem} \label{product}
The point $[x,y,z]$ belongs to  $\langle a,b,c \rangle$ if and only
if $xa +yb+ zc = 0$.
\end{lem}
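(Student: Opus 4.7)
The plan is case analysis on the normalized form of $\langle a,b,c\rangle$. By homogeneity $\langle a,b,c\rangle=\langle\gamma a,\gamma b,\gamma c\rangle$ for $\gamma\ne 0$, every line reduces to one of three canonical forms: $\langle j,-1,i\rangle$ (when $b\ne 0$, after rescaling so the middle entry is $-1$), $\langle 1,0,c\rangle$ (when $b=0$ and $a\ne 0$), and $\langle 0,0,1\rangle$ (when $a=b=0$). In each case I would list the $p+1$ already-constructed points on the line, verify the equation $ax+by+cz=0$ at each by direct substitution, and then close the converse with a counting argument.

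For $\langle j,-1,i\rangle$, the $p+1$ points are $[0,i,1]$ (from the original set $\langle j,-1,i\rangle_p$) together with $[1,j-ki,-k]$ for each $k\in\mathbb{Z}_p$; Lemma \ref{LPP}(ii) taken at $t=0$ places every such $[1,j-ki,-k]$ on the line through $[0,i,1]$ and $[1,j,0]$. Substitution yields $j\cdot 0 - i + i\cdot 1=0$ and $j\cdot 1-(j-ki)+i(-k)=0$ identically. For $\langle 1,0,c\rangle$ with $c=k^{-1}\ne 0$, the $p+1$ points are $\{[1,t,-k]:t\in\mathbb{Z}_p\}\cup\{[0,1,0]\}$, identified in the paragraph preceding the lemma, and substitution gives $1+0+k^{-1}(-k)=0$ and $0=0$. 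The residual cases $\langle 1,0,0\rangle$, $\langle 1,0,-1\rangle$, and $\langle 0,0,1\rangle$ are verified directly from the original point sets in $L_p$.

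For the converse, I would invoke the elementary counting fact that over $\mathbb{Z}_p$ a nonzero linear form vanishes on exactly $p+1$ projective points of $\mathbb{P}^2(\mathbb{F}_p)$. Since in each case I have supplied $p+1$ pairwise distinct points of $\langle a,b,c\rangle$ (distinctness following from Lemma \ref{LPP}(iii) and its proof, which shows $[1,r_1,-k]\ne[1,r_2,-k]$ whenever $r_1\ne r_2$) that all satisfy $ax+by+cz=0$, these points exhaust the solution set of the linear equation, so every projective solution does lie on $\langle a,b,c\rangle$. The main subtlety is to confirm that each listed point genuinely belongs to the harmonic closure $h^\infty(\langle a,b,c\rangle_p)$ rather than only to the surrounding rank-$2$ flat given by Lemma \ref{closure}; this is secured by Lemma \ref{LPP}(ii) and (iv) together with the explicit set-theoretic definitions of $\langle 1,0,k^{-1}\rangle$ and $\langle 0,0,1\rangle$ recorded immediately before the lemma.
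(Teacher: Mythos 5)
There is a genuine gap: your argument only proves the ``if'' half of the biconditional. Listing $p+1$ points of $\langle a,b,c\rangle$ that satisfy $ax+by+cz=0$, and noting that a nonzero form over $\mathbb{Z}_p$ has exactly $p+1$ projective zeros, shows that every coordinate triple satisfying the equation names a point lying in $\langle a,b,c\rangle$. It does not show the converse direction of the lemma, namely that a named point lying in $\langle a,b,c\rangle$ must satisfy the equation. For the sets $\langle j,-1,i\rangle$ this is the substantive claim: they are defined as harmonic closures $h^\infty(\langle j,-1,i\rangle_p)$, not by an explicit list, and at this stage of the paper nothing rules out that such a closure contains additional named points violating the relation (that $\langle j,-1,i\rangle$ coincides with the $(p+1)$-point set $C(j,i)$ is only established later, in Proposition \ref{LPP3}, whose proof depends on this lemma). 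Your counting argument bounds the number of solutions of the equation, not the number of named points the closure can swallow, so it cannot close this direction; and this direction is exactly what Lemma \ref{cp} and Proposition \ref{LPP2} later use.

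The missing idea is the paper's rank/collinearity reduction to $L_p$. Since $\rk\big(\langle j,-1,i\rangle\big)=2$ by Lemma \ref{closure}, membership of a point $v$ in $\langle m,-1,n\rangle$ forces $\{v,[0,n,1],[1,m,0]\}$ to be collinear in $M$; when $v=[0,1,0]$ or $v=[0,l,1]$ with $l\neq n$ this already contradicts the embedding of $L_p$ (those triples are triangles in $L_p$), and when $v=[1,l,s]$ one writes $l=j+si$ and applies Lemma \ref{LPP}(ii) with $t=n-i$ to force $\{[1,m,0],[1,j-s(n-i),0],[0,n,1]\}$ collinear, hence $[1,m,0]=[1,j-s(n-i),0]$ in $L_p$, which is precisely the relation $m-l+ns=0$. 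Without an argument of this kind the ``only if'' direction is unproved. A smaller point: for membership of $[1,j-ki,-k]$ in the closure $\langle j,-1,i\rangle$ you should cite Lemma \ref{LPP}(i) (the intersection statement, at $t=0$) or display (\ref{eq5}); part (ii) gives only collinearity in $M$, which, as you yourself note, is weaker than membership in the harmonic closure.
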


\begin{proof}
A point $[x,y,z]$ is either $[0,1,0]$, or of the form $[1,l,k]$ or
$[0,l,1]$ for $l,k \in \mathbb Z_p$. A set $\langle a,b,c \rangle$
is either $\langle 0,0,1 \rangle$, or of the form $\langle 1,0,
k^{-1} \rangle$ for $k \not = 0$, or of the form $\langle j,-1,i
\rangle$ for $i,j \in \mathbb Z_p$.

First of all we consider the point $[0,1,0]$. This point does not
belong to a set of the form $\langle j,-1,i \rangle$. Indeed,
suppose that $[0,1,0] \in \langle j,-1,i\rangle$. So, $\big\{
[0,1,0],[0,i,1],[1,j,0] \big \} \subseteq \langle j,-1,i \rangle$.
Therefore by Lemma \ref{closure}
$$\rk \Big( \big\{ [0,1,0],[0,i,1],[1,j,0] \big\} \Big)  \leq \rk \big( \langle j,-1,i \rangle \big) = 2.$$
That is a contradiction because $\big\{ [0,1,0],[0,i,1],[1,j,0]
\big\}$ is not a collinear set of $L_p$. This and Lemma \ref{LPP}
(\emph{iv}) imply that the point belongs only to the sets $\langle
0,0, 1 \rangle$  and $\langle 1,0, k^{-1} \rangle$ for $k \not = 0$.
Thus $[0,1,0] \in \langle a,b,c \rangle$ if and only if $xa +yb+ zc
= 0$.

We suppose that $[x,y,z]$ is a point of the form $[0,l,1]$ for $l
\in \mathbb Z_p$. From Lemma \ref{LPP} parts (\emph{iii}) and
(\emph{v}) we deduce that  $[0,l,1] \not \in \langle 1,0,t \rangle$
for $t \in \{1, \cdots , p-1\}$. By the incidence relation between
points and lines in $L_p$, the point $[0,l,1]$ belongs exactly to
the sets $\langle 1,0,0 \rangle$ and  $\langle t,-1,l \rangle_p$ for
$t \in \mathbb Z_p$. These imply that $[0,l,1]$ belongs exactly to
the sets $\langle 1,0,0 \rangle$ and  $\langle t,-1,l \rangle$ for
$t \in \mathbb Z_p$. Now, for $[x,y,z] = [0,l,1]$ is easy to verify
that $[x,y,z] \in \langle a,b,c \rangle$ if and only if $xa +yb+ zc
= 0$.

We suppose next that $[x,y,z]$ is of the form $[1,l,s]$ for $l,s \in
\mathbb Z_p$. From Lemma \ref{LPP} parts (\emph{iii}) and
(\emph{iv}) we deduce that neither  $\langle 1,0,t \rangle$ for $t
\not = -s^{-1}$ nor $\langle 0,0,1 \rangle$ contain the point
$[1,l,s]$ if $s \not = 0$ and we deduce also that  $[1,l,0] \not \in
\langle 1,0,t \rangle$ for $t \in \{1, \cdots , p-1 \}$.

We now suppose that $[1,l,s]  \in \langle m,-1,n \rangle$. Thus,
$\big\{[1,l,s],[0,n,1],[1,m,0] \big\}$ is collinear. This, taking
$l=j+si$ for some $i,j \in \mathbb Z_p$ and Lemma \ref{LPP}
(\emph{ii}) with $t = n-i$, implies that
$$\big\{[1,m,0],[1,j+si,s],[1,j-st,0],[0,i+t,1] \big\}$$
is collinear. So, $\big\{ [1,m,0], [1,j-st,0],[0,i+t,1] \big\}$ is
collinear in $L_p$. This implies that $[1,m,0] = [1,j-st,0]$.
Therefore $\langle m,-1,n \rangle = \langle j-st,-1,i+t \rangle$.
This completes the proof that $[1,l,s]$ belongs only to $\langle j
-st, -1, i+t \rangle$ for $t \in \mathbb Z_p$ and belongs either to
$\langle 1,0,-s^{-1} \rangle$ if $s \not = 0$ or  $\langle 0,0,1
\rangle$ if $s = 0$. Thus, $[1,l,s] \in \langle a,b,c \rangle$ if
and only if $xa +yb+ zc = 0$.
\end{proof}

For $i,j \in \mathbb Z_p$ we denote by $C(j,i)$ the set
$$C(j,i) := \big\{[0,i,1], [1, j-k i,-k]: k \in \mathbb {Z}_p \big \}.$$

\begin{lem}  \label{cp}  Let $i,j, l,t \in  \mathbb Z_p$. Then

(i) $C(j,i) \subseteq \langle j, -1,i \rangle$

(ii) If $j \not = s$ or $i \not = t$ then  $C(j,i)$ and $C(s,t)$ are
different and they intersect in a point.
\end{lem}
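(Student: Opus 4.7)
The plan is to deduce part (i) directly from Lemma \ref{LPP}(i), and then to establish part (ii) by a short case analysis on whether $i=t$, using the distinctness of constructed points asserted in Lemma \ref{LPP}(iii)--(iv). The one nontrivial step is ruling out coincidences $[1,a,-k_1]=[1,b,-k_2]$ across distinct values $k_1\neq k_2$; I would settle this first and treat everything else as bookkeeping.

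Granting that distinctness, part (i) is immediate: the point $[0,i,1]$ lies in $\langle j,-1,i\rangle_p\subseteq\langle j,-1,i\rangle$, and specializing $t=0$ in Lemma \ref{LPP}(i) yields
\[
[1,j-ki,-k]\in\bigcap_{t\in\mathbb Z_p}\langle j+kt,-1,i+t\rangle\subseteq\langle j,-1,i\rangle
\]
for each $k\in\mathbb Z_p$, so $C(j,i)\subseteq\langle j,-1,i\rangle$.

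For part (ii), assume $(j,i)\neq(s,t)$. If $i=t$, then $j\neq s$; both sets contain $[0,i,1]$, and any further common point would satisfy $[1,j-ki,-k]=[1,s-k'i,-k']$. The across-$k$ distinctness forces $k=k'$, and then Lemma \ref{LPP}(iii) gives $j=s$, a contradiction; hence the intersection is exactly $\{[0,i,1]\}$. Moreover $[1,j,0]\in C(j,i)\setminus C(s,i)$, since $[1,j,0]=[1,s,0]$ with $j\neq s$ would again contradict Lemma \ref{LPP}(iii), so $C(j,i)\neq C(s,i)$. If instead $i\neq t$, then $[0,i,1]\neq[0,t,1]$ by Lemma \ref{LPP}(iii), so the sets differ at once, and no $[0,\cdot,1]$ equals any $[1,\cdot,-k]$. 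Any common point therefore satisfies $[1,j-ki,-k]=[1,s-k't,-k']$; as before $k=k'$, and $j-ki=s-kt$ rearranges to $k(i-t)=j-s$, which is uniquely solvable for $k$ because $i-t$ is invertible in $\mathbb Z_p$. Thus the two sets meet in exactly one point.

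The main obstacle is the across-$k$ distinctness. I would prove it as follows. Suppose $[1,a,-k_1]=[1,b,-k_2]$ with $k_1\neq k_2$. Applying Lemma \ref{LPP}(ii) with $i=0$ to both parameters, for every $t\in\mathbb Z_p$ the line in $M$ through $[0,t,1]$ and the supposed common point contains both $[1,a+k_1t,0]$ and $[1,b+k_2t,0]$. Because this line also contains $[0,t,1]\notin\langle 0,0,1\rangle_p$, it is distinct from $\langle 0,0,1\rangle_p$, and two distinct lines in $M$ meet in at most one point; hence $a+k_1t=b+k_2t$ for every $t\in\mathbb Z_p$, forcing $k_1=k_2$, a contradiction. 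Once this is in hand, the case analysis above is entirely mechanical.
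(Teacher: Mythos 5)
Your proof is correct, but it follows a noticeably different route from the paper's. The paper disposes of part (i) as immediate (as you do), and for part (ii) it argues in two strokes: the sets are distinct because, e.g., $[0,t,1]\in C(j,i)\subseteq\langle j,-1,i\rangle$ would violate the incidence criterion of Lemma \ref{product} ($0\cdot j+t(-1)+1\cdot i=0$ forces $i=t$), and a common point is produced by writing $s=j+kl$, $t=i+l$ and invoking Lemma \ref{LPP}(i), so that $[1,j-ki,-k]$ lies in both $C(j,i)$ and $C(s,t)$ --- your equation $k(i-t)=j-s$ is exactly this parametrization in different clothing. You instead avoid Lemma \ref{product} altogether and run the whole argument on distinctness of the labelled points: the within-$k$ distinctness you quote from Lemma \ref{LPP}(iii), and the across-$k$ distinctness you prove from scratch via Lemma \ref{LPP}(ii) with $i=0$ and the fact that a line through $[0,t,1]$ is not $\cl\big(\langle 0,0,1\rangle_p\big)$. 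That extra argument is sound (and in the spirit of the paper), but note it essentially reproves the ``new point'' step inside the paper's proof of Lemma \ref{LPP}(iii), where coincidences across different $k$ are already excluded by reducing them to non-collinear triples of $L_p$ on the line at infinity; alternatively, across-label distinctness also falls out of Lemma \ref{product}, which precedes Lemma \ref{cp} and is what the paper leans on. Two small points of care in your version: when you say two distinct lines of $M$ meet in at most one point, what you actually need (and have) is that the two candidate intersection points are distinct elements of the embedded simple matroid $L_p$, hence not parallel in $M$, so they cannot sit in a rank-one intersection of two rank-two flats; and you should note that $[0,t,1]\notin\cl\big(\langle 0,0,1\rangle_p\big)$ in $M$ because the corresponding set has rank $3$ in $L_p$ and the embedding preserves ranks. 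With those glosses your argument is complete; it is longer and more self-contained than the paper's, which buys independence from Lemma \ref{product} at the cost of some duplication, while the paper's proof is shorter but proves only one case of (ii) explicitly and leaves (i) to the reader.
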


\begin{proof} We only prove one of the two cases of (\emph{ii}).
If $i \not = t$ and $C(j,i)=C(s,t)$, then $[0,t,1] \in C(j,i)$.
Thus, by Lemma (\ref{product}) $0j+t(-1)+1(i)=0$. That is
contradiction because $i \not = t$. By Lemma \ref{LPP} (\emph{i})
and taking $s = j+kl$ and $t=i+l$ for some $k,l \in \mathbb Z_p$, it
follows that $C(j,i)$ and $C(s,t)$ intersect in a point.
\end{proof}

\begin{lem} \label{cp2} If $i,j,k$ are elements of $ \mathbb Z_p$,
then $\langle j,k,i \rangle$ is subset of $h^ \infty (L_p )$.
\end{lem}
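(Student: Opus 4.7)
The plan is to case-split on which of the three families the set $\langle j,k,i\rangle$ belongs to, using the normalization conventions of the paper: (a) the original lines $\langle 1,0,0\rangle$ and $\langle 0,0,1\rangle$ of $L_p$ (which are also lines in our extended notation), (b) the sets $\langle j,-1,i\rangle = h^\infty(\langle j,-1,i\rangle_p)$ for $i,j\in\mathbb Z_p$, and (c) the sets $\langle 1,0,k^{-1}\rangle = \{[1,t,-k]:t\in\mathbb Z_p\}\cup\{[0,1,0]\}$ for $k\ne 0$. Up to scalar multiplication these exhaust the line coordinates of $PG(2,p)$, so handling the three families suffices. Family (a) is immediate since these sets are already subsets of $L_p \subseteq h^\infty(L_p)$.

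For family (b) I would appeal directly to the fact, recorded in Lemma~\ref{closure}, that $h^\infty$ is a monotone abstract closure operator: since $\langle j,-1,i\rangle_p\subseteq L_p$, we get $\langle j,-1,i\rangle = h^\infty(\langle j,-1,i\rangle_p) \subseteq h^\infty(L_p)$ in one line.

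Family (c) is where the actual work lives. The argument I have in mind is an induction on $k$ running in parallel with the induction $P(k)$ that proves Lemma~\ref{LPP}. The base cases $k\in\{-1,0\}$ are immediate, since the corresponding points $[1,t,1]$ and $[1,t,0]$ together with $[0,1,0]$ all belong to $L_p$. For the inductive step, Lemma~\ref{LPP} defines $[1,j-ki,-k]$ as the harmonic conjugate $x_s$ produced by the $\HP$-configuration displayed in~(\ref{eq3}). Every one of the seven points in that configuration is either in $L_p$ (namely $[0,i,1]$, $[0,i+s,1]$, $[0,1,0]$) or of the form $[1,a,-(k-1)]$ or $[1,a,-(k-2)]$, and the latter already lie in $h^\infty(L_p)$ by the inductive hypothesis. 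Because $h^\infty(L_p)$ is harmonically closed by construction, the new harmonic conjugate $x_s=[1,j-ki,-k]$ must also lie in $h^\infty(L_p)$, completing the induction.

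The main obstacle is purely bookkeeping: one must check, at every recursive call, that each auxiliary point entering the $\HP$-witness is itself available from a strictly earlier stage of the induction. This is exactly the structure Lemma~\ref{LPP} has been set up to provide, so no new geometric content is required. One simply runs the induction in parallel with that of Lemma~\ref{LPP} and observes at each step that the entire witnessing $\HP$-configuration lies in $h^\infty(L_p)$, so the resulting harmonic conjugate does too.
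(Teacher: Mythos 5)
Your proof is correct and follows essentially the same route as the paper: the lines of $L_p$ and the sets $\langle j,-1,i\rangle$ are handled by monotonicity of $h^\infty$, and the sets $\langle 1,0,k\rangle$ are handled by appealing to the construction in the proof of Lemma \ref{LPP}. The only (harmless) difference is in how that construction is exploited: the paper observes that each $\langle 1,0,k\rangle$ is contained in $\bigl(\bigcup_{i,j}\langle j,-1,i\rangle\bigr)\cup\langle 0,0,1\rangle$, which is already inside $h^\infty(L_p)$, whereas you re-run the induction of Lemma \ref{LPP} and note that each new point arises as a harmonic conjugate of points of $h^\infty(L_p)$, hence lies in $h^\infty(L_p)$ by harmonic-closedness; both are valid one-step deductions from the same procedure.
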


\begin{proof}
For each pair $i,j $ of elements of  $\mathbb Z_p,  \ \langle j,-1,i
\rangle _p \subset L_p$
 and $\langle 0,0,1 \rangle \subset L_p$. Then
$h^{\infty} \big (\langle j,-1,i \rangle _p \big) \subset
h^{\infty}(L_p)$ and $h^{\infty}\big( \langle 0,0,1 \rangle \big)
\subset h^{\infty}(L_p)$. To complete the proof, we deduce from the
procedure in the proof of Lemma \ref{LPP} that for each $k \in
\mathbb Z_p$,  $\langle 1,0,k \rangle$ is a subset of
\[
\bigg( \bigcup
_{i,j \in \mathbb Z_p}  \langle j,-1,i \rangle \bigg) \ \cup \langle
0,0,1 \rangle.
 \qedhere
 \]
\end{proof}

For $p$ a prime we define:

$P    \ \  := \ \big \{ [x,y,z]: x, y,z \in \mathbb Z_p, \text{ not all equal }0 \big \},$

\indent \indent \ \ := \ $\big \{ [1,i,k], [0,i,1], [0,1,0] :  i, k \in \mathbb Z_p \big \},$

$\cL _1 \ := \ \big \{  \langle 0,0,1 \rangle, \langle 1,0,k \rangle
: k \in \mathbb Z_p \big\},$

$\cL _2 \ := \ \big\{ C(j,i): i,j \in \mathbb Z_p \big\},$

$\cL    \ \ := \ \cL _1 \cup \cL _2.$

\begin{prop} \label{LPP2}
The sets $P$ and $\cL $ are the sets of points and lines of a
Desarguesian projective plane of order $p$.
\end{prop}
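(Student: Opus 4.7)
The plan is to identify $(P, \cL)$ with the classical projective plane $PG(2, p)$ as an incidence structure, from which the plane axioms, the order $p$, and Desarguesianness will all transfer, the last because $PG(2, p)$ is coordinatised by the field $\mathbb Z_p$.

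First I would verify the bijection on points and on lines. By Lemma \ref{LPP} (iii), the $p^2 + p + 1$ labels of $P$ correspond to pairwise distinct points of $M$. For lines, $|\cL_1| = p + 1$ (the pencil through $[0, 1, 0]$) and $|\cL_2| = p^2$ sets $C(j, i)$, distinct by Lemma \ref{cp} (ii), giving $|\cL| = p^2 + p + 1$; the natural correspondence sends $\langle a, b, c \rangle \in \cL_1$ to $\langle a, b, c \rangle_p$ and $C(j, i) \in \cL_2$ to $\langle j, -1, i \rangle_p$. Incidence preservation then follows from Lemma \ref{product}: for any $\langle a, b, c \rangle \in \cL_1$, a point $[x, y, z] \in P$ lies on it iff $ax + by + cz = 0$, matching $PG(2, p)$; and for $C(j, i) \in \cL_2$, the defining enumeration $\{[0, i, 1]\} \cup \{[1, j - ki, -k] : k \in \mathbb Z_p\}$ is exactly the set of points of $P$ satisfying $jx - y + iz = 0$ (via Lemma \ref{product} applied to $\langle j, -1, i \rangle$ together with Lemma \ref{cp} (i)), again matching $PG(2, p)$.

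Given this incidence isomorphism, the projective plane axioms transfer directly: any two distinct points of $P$ are joined by the unique line obtained by solving the corresponding pair of linear equations over $\mathbb Z_p$; any two distinct lines meet in the unique point solving the analogous system; and for instance the four points $[1, 0, 0], [0, 1, 0], [0, 0, 1], [1, 1, 1]$ witness general position. Each line has exactly $p + 1$ points, so the order is $p$, and Desarguesianness is inherited from $PG(2, p)$. The only real work is the bookkeeping to ensure that $\cL_1$ and $\cL_2$ together exhaust the $p^2 + p + 1$ lines of $PG(2, p)$, with $\cL_1$ capturing the pencil through $[0, 1, 0]$ and $\cL_2$ covering the remainder; this is where one might slip up, but the index counts and Lemma \ref{product} make it routine rather than substantive.
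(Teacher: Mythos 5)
Your proof is correct, but it is organized differently from the paper's. You build an explicit incidence isomorphism between $(P,\cL)$ and $PG(2,p)$: points are matched by their labels (pairwise distinct by Lemma \ref{LPP}), the members of $\cL_1$ and the sets $C(j,i)$ are matched with the coordinate lines $\langle a,b,c\rangle_p$, and incidence is read off from Lemma \ref{product} together with the defining enumeration of $C(j,i)$, after which the plane axioms, the order $p$, and Desarguesianness all transfer from the coordinate plane. The paper instead verifies the two plane axioms directly inside $M$: any two members of $\cL$ meet in a point (Lemma \ref{cp}(\emph{ii}) for two sets of the form $C(j,i)$, explicit checks for a set of $\cL_2$ against a set of $\cL_1$, and Lemma \ref{LPP}(\emph{iv}) for two members of $\cL_1$), and any two points lie on a common member of $\cL$ via a covering argument extracted from the construction in the proof of Lemma \ref{LPP} ($p+1$ of the sets pass through a given point and together cover $P$); only Desarguesianness is obtained, as in your argument, from Lemma \ref{product}. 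Your route is more uniform and makes the order and the Desargues property automatic, at the cost of leaning on the full strength of Lemma \ref{product} and on the distinctness of all the labels; note that the distinctness of $[1,s,-k]$ and $[1,s',-k']$ for $k \neq k'$ is established inside the proof of Lemma \ref{LPP} (the ``new point'' argument) rather than by the literal statement of part (\emph{iii}), so it should be cited in that form. Also, the lemmas you invoke are stated only for odd primes, so the case $p=2$, where $L_2$ is already the Fano plane, should be disposed of separately, as the paper does in one sentence.
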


\begin{proof} For $p=2$ the proof follows from the definition of
$L_2$.

Now suppose that $p \not = 2$. First we prove that any two sets in
$\cL $ intersect in a point. By Lemma \ref {cp} (\emph{ii}), the
intersection of any two sets of the form $ C( j,i )$ is exactly a
point. Let $l \in \cL _1$. If $l = \langle 1,0,k^{-1} \rangle$ for
some $k \not = 0$ in $\mathbb Z _p$ then a point in $l\setminus
[0,1,0]$ has the form $[1,s,-k]$ for $s \in \mathbb Z_p$. Taking $s
= j-ki$, it follows that $[1,j-ki,-k] \in C(j,i) \cap l$. A
similarly argument applies if $l$ equals $\langle 1,0,0 \rangle$ or
$ \langle 0,0,1 \rangle $. Finally, by Lemma \ref{LPP}
(\emph{iv}), any two sets in $\cL _1$ intersect at $[0,1,0]$.
Therefore the intersection of any two sets in $\cL$ is a point.

We now prove that given two points in $P$ there exists a set in
$\cL$ containing both points.

Let $v \in P$  distinct from $[0,1,0]$. The construction in the
proof of Lemma \ref{LPP}, and the definition of $L_p$, imply that
there are $p$ sets of the type $C(j,i)$ and one set that is either
$\langle 0,0,1 \rangle $ or is of the form $\langle 1,0,k \rangle$,
such that all these $p+1$ sets intersect in $v$, and that $P$ is
contained in the union of those $p+1$ sets. Therefore, given $s$ in
$P$, there is a set in $\cL$ that contains both points $v$ and $s$.

For $v = [0,1,0]$, the analysis is as above but taking the sets
$\langle 0,0,1 \rangle $, $\langle 1,0,k \rangle$ for $k \in \mathbb
Z_p$.

By Lemma \ref {product} it follows that $\Pi$ is Desarguesian.
\end{proof}

We denote by $\Pi$ the projective plane given in Proposition \ref{LPP2}.
This proposition shows that we have constructed a projective
plane within $M$, but we still need to prove that $\Pi$ is the
harmonic closure of the initial matroid $L_p$.

\begin{prop} \label{LPP3} Assume that $L_p$ is embedded in a harmonic matroid
$M$. Then $P$ and all lines in $\Pi$ are harmonically closed in $M$.
In fact $P = h^{\infty}(L_p)$ and for any pair $i,j$ of elements of
$\mathbb Z_p$, $C (j,i) = \langle j,-1,i \rangle$.
\end{prop}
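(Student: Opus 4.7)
My plan is to first establish $P \subseteq h^\infty(L_p)$ and the rank equality $\rk(P) = 3$, then prove a Matching Lemma identifying $M$-collinearity on $P$ with $\Pi$-collinearity, and finally use the Matching Lemma together with uniqueness of harmonic conjugates in $M$ to derive all four assertions.

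The inclusion $P \subseteq h^\infty(L_p)$ follows by induction on $k$ from the construction in the proof of Lemma \ref{LPP}, since each new point $[1, j - ki, -k]$ is explicitly a harmonic conjugate of points built at earlier stages and the base case $k \in \{-1, 0\}$ already lies in $L_p$. Combined with $L_p \subseteq P$ and Lemma \ref{closure}, this yields $\rk(P) = \rk(L_p) = 3$. The Matching Lemma states: three distinct points $v_1, v_2, v_3 \in P$ are $M$-collinear if and only if they lie on a common line of $\cL$. The ``if'' direction combines Lemma \ref{cp}(i) (and, for lines of $\cL_1$, Lemma \ref{LPP}(iv)) with Lemma \ref{closure}, showing each line of $\cL$ sits inside the $M$-closure of any two of its points. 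For the converse, suppose $v_1, v_2, v_3$ are $M$-collinear but not $\Pi$-collinear, and let $\ell = \cl(\{v_1, v_2\})$. For any distinct $w_1, w_2 \in P \cap \ell$ the $\Pi$-line through $w_1, w_2$ is contained in $\cl(\{w_1, w_2\}) = \ell$, so $S := P \cap \ell$ is closed under the operation of taking $\Pi$-lines through pairs of its points and contains the non-$\Pi$-collinear triple $v_1, v_2, v_3$. A short argument in the projective plane $\Pi$ (any such subset containing three non-collinear points is the whole plane) forces $S = P$, whence $\rk(P) \le \rk(\ell) = 2$, contradicting $\rk(P) = 3$.

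To show $P$ is harmonically closed, take $a, b, c \in P$ distinct, $M$-collinear, with $\Hc(a, c; b, x)$ in $M$. Pick a point $o \in P$ off the $\Pi$-line through $a, b, c$ and construct $q, r, s \in P$ by the standard harmonic-conjugate configuration in the Desarguesian plane $\Pi$. By the Matching Lemma, the $M$-restriction to $\{a, b, c, o, q, r, s\}$ has the same collinear triples as the $\Pi$-restriction, and its $M$-rank is $3$ (otherwise all seven points would be $M$-collinear, hence $\Pi$-collinear), so the $M$-restriction equals the Fano or non-Fano matroid of the $\Pi$-restriction. Thus $\HP(a, b, c; o, q, r, s)$ holds in $M$, and uniqueness of the harmonic conjugate gives $x = \cl(\{a, c\}) \cap \cl(\{o, r\})$, which by the Matching Lemma equals the $\Pi$-intersection of those two lines, a point of $P$. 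Therefore $h^\infty(L_p) \subseteq P$ and $P = h^\infty(L_p)$.

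For $C(j, i) = \langle j, -1, i\rangle$: Lemma \ref{cp}(i) gives the first inclusion, and the reverse follows from $\langle j, -1, i\rangle \subseteq h^\infty(L_p) = P$ together with $\langle j, -1, i\rangle \subseteq \cl(\langle j, -1, i\rangle_p)$ (Lemma \ref{closure}) and the Matching Lemma, which identifies $P \cap \cl(\langle j, -1, i\rangle_p)$ with $C(j, i)$. Finally, each line of $\Pi$ is harmonically closed: for $C(j, i) = h^\infty(\langle j, -1, i\rangle_p)$ this is idempotence of $h^\infty$, and for the remaining lines $\langle 1, 0, k\rangle$ and $\langle 0, 0, 1\rangle$ the argument of the previous paragraph applied inside the line gives the conclusion, since the $\Pi$-harmonic conjugate of three points on a $\Pi$-line stays on that line. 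The main obstacle is the Matching Lemma: it is where the projective-plane incidence of $\Pi$ must be reconciled with the ambient matroid structure of $M$, and once that is in hand the remaining claims are bookkeeping.
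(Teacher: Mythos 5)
Your proposal is correct and follows essentially the paper's route: harmonic configurations for collinear triples of $P$ are realized inside the projective plane $(P,\cL)$ of Proposition \ref{LPP2}, and uniqueness of the harmonic conjugate in $M$ pins the conjugate to the corresponding point of $P$ (indeed to the relevant line), after which $P=h^{\infty}(L_p)$ and $C(j,i)=\langle j,-1,i\rangle$ are bookkeeping. Your explicit \emph{Matching Lemma} (agreement of $M$-collinearity with $\Pi$-collinearity on $P$, proved via the rank-$3$ argument) is precisely the step the paper invokes tersely when it passes from $\HP$ holding ``in $P$'' to $\HP$ holding ``in $M$'' and when it asserts that every collinear triple of $P$ lies in some $\langle j,k,i\rangle$, so your write-up is a more detailed version of the same argument rather than a different one.
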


\begin{proof} If $p=2$ then $L_2= \Pi$, so there is nothing to
prove.

First of all we prove that every line in $\Pi$ is harmonically
closed. Let $a, b, c$ be distinct points in a line $l\in \Pi$. Let
$t$ be a point in $\Pi$ that is not in $l$. It is easy to show that
four points in a projective plane of order $p$, where exactly three
are collinear give rise to a matroid as in Figure \ref{f1}
(\emph{b}). Thus, there are points $x,y,w \in P$ with $\HP
(a,b,c;x,t,y,w )$ in $P$, therefore in $M$. So there is a point $v
\in M$ with $\Hc \big (a,c ;b,v \big)$ and with $ \cl \big( \{a,,c
\} \big ) \cap \cl \big ( \{x,y \} \big) = \{ v \} $. As $v$ is the
unique point of $M$ in this intersection, it follows that $v\in
C(j,i)$ because $\Pi$ is a projective plane.

We now prove that $P$ is harmonically closed. By Lemma \ref{LPP} and
Proposition \ref{LPP2}, every three collinear points in $P$ belong
to a set $\langle j,k,i \rangle $. Since this set is harmonically
closed, the conclusion follows.
\end{proof}

The proof of Theorem \ref{PP} follows from Propositions \ref{LPP2}
and \ref{LPP3}.

\begin{prop} Assume that $L_p$ is embedded in a harmonic matroid $M$. Then
$$\langle j,-1,i \rangle = \cl \big(\langle j,-1,i
\rangle_p\big) \cap  h^ \infty \big( L_p \big).$$
\end{prop}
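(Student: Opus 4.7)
The plan is to prove both inclusions separately, exploiting the projective plane $\Pi := h^\infty(L_p)$ supplied by Theorem \ref{PP}. I will use throughout that by Proposition \ref{LPP3} the set $\langle j,-1,i\rangle$ coincides with the line $C(j,i)$ of $\Pi$, and that by Lemma \ref{closure} one has $\rk(\langle j,-1,i\rangle) = \rk(\langle j,-1,i\rangle_p) = 2$; in particular $\cl(\langle j,-1,i\rangle_p) = \cl(\langle j,-1,i\rangle)$.

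The inclusion $\langle j,-1,i\rangle \subseteq \cl(\langle j,-1,i\rangle_p) \cap h^\infty(L_p)$ is immediate from the definition $\langle j,-1,i\rangle = h^\infty(\langle j,-1,i\rangle_p)$: monotonicity of $h^\infty$ keeps it inside $h^\infty(L_p)$, and Lemma \ref{closure} keeps it inside $\cl(\langle j,-1,i\rangle_p)$.

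For the reverse inclusion, I reduce to the following geometric claim: for every line $\ell$ of $\Pi$, $\cl(\ell) \cap \Pi = \ell$. Applying this with $\ell = \langle j,-1,i\rangle$ produces exactly the missing inclusion. To prove the claim I argue by contradiction: suppose some $y \in \cl(\ell) \cap \Pi$ lies off $\ell$, pick any $z \in \ell$, and let $\ell'$ be the unique line of $\Pi$ through $y$ and $z$. Then $\{y,z\}\subseteq \cl(\ell)$, so $\cl(\ell') \subseteq \cl(\ell)$, and by rank equality these two closures coincide. For an arbitrary $w \in \Pi$, I then choose a line $\ell_w$ of $\Pi$ through $w$ avoiding the point $\ell \cap \ell'$; this is always possible because through $w$ pass $p+1$ lines of $\Pi$ and at most one of them contains a given point (the case $w \in \ell \cup \ell'$ is handled directly). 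Such a $\ell_w$ meets $\ell$ and $\ell'$ in two distinct points, both in $\cl(\ell)$, whence $\ell_w \subseteq \cl(\ell)$ and so $w \in \cl(\ell)$. Therefore $\Pi \subseteq \cl(\ell)$, contradicting $\rk(\Pi) = \rk(h^\infty(L_p)) = \rk(L_p) = 3 > 2 = \rk(\cl(\ell))$, where the first equalities come from Lemma \ref{closure}.

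The main obstacle is precisely the claim $\cl(\ell)\cap\Pi = \ell$: it says that the matroid-collinearity structure $M$ induces on $\Pi$ agrees with the projective-plane structure, i.e., no ``phantom'' point of $\Pi$ can lie in the matroid closure of a projective line while sitting off that line. The cleanest way to rule this out is the rank-$3$ obstruction above, which forces distinct projective lines to have distinct matroid closures. Once this is in hand the proposition is immediate. The edge case $p = 2$, where $L_2 = \Pi$ and no harmonic closure is required, is trivial.
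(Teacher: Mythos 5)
Your proof is correct and follows the same overall skeleton as the paper's: the easy inclusion is obtained exactly as in the paper (monotonicity of $h^\infty$ together with Lemma \ref{closure}; the paper cites Lemma \ref{cp2} for the containment in $h^\infty(L_p)$), and the reverse inclusion is reduced to the projective plane $\Pi$ via Proposition \ref{LPP3}. Where you diverge is in how the crux of the reverse inclusion is justified. The paper argues directly: since $h^\infty(L_p)\subseteq P$, a point $x$ of $\cl(\langle j,-1,i\rangle_p)\cap h^\infty(L_p)$ gives an $M$-collinear triple $\{x,[0,i,1],[1,j,0]\}$ inside $P$, and it then invokes Proposition \ref{LPP2} to place this triple on $C(j,i)$. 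You instead isolate and prove the underlying geometric fact explicitly: for every line $\ell$ of $\Pi$ one has $\cl(\ell)\cap P=\ell$, shown by the rank argument that a ``phantom'' point would force two distinct $\Pi$-lines to have the same closure and hence $P\subseteq\cl(\ell)$, contradicting $\rk(P)=\rk(h^\infty(L_p))=\rk(L_p)=3$. This buys a self-contained justification of precisely the step the paper treats tersely (Proposition \ref{LPP2} is an incidence statement and does not by itself identify $M$-collinearity with $\Pi$-collinearity), at the cost of using two facts you leave implicit but which the earlier sections supply: every line of $\Pi$ (including those in $\cL_1$, via Lemma \ref{LPP} $(iv)$) is a rank-$2$ set of $M$, and distinct points of $P$ are non-parallel so that two of them span the closure of any $\Pi$-line containing them; the paper glosses the same points, so this is not a gap peculiar to your argument.
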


\begin{proof} By Lemma \ref{closure},
$\langle j,-1,i \rangle =  h^{\infty} \big( \langle j,-1,i \rangle_p
\big) \subseteq  \cl \big ( \langle j,-1,i \rangle_p \big )$. By
Lemma \ref{cp2}, $\langle j,-1,i \rangle \subseteq h^ \infty \big(
L_p \big )$. These imply that $\langle j,-1,i \rangle \subseteq \cl
\big ( \langle j,-1,i \rangle_p \big) \cap  h^ \infty \big( L_p
\big)$.

We now prove the reverse containment. Since $L_p \subseteq P $,
$h^{\infty} ( L_p ) \subseteq h^{\infty} (P) = P $. Let $x \in \cl
(\langle j,-1,i \rangle_p) \cap  h^ \infty ( L_p )$. Thus,
$\big\{x,[0,i,1], [1,j,0] \big\}$ is a collinear subset of $h^
\infty ( L_p )$. Hence, by Proposition \ref{LPP2}, $\big\{x,[0,i,1],
[1,j,0] \big\} \subseteq C(j,i)$. So, by Proposition \ref{LPP3}, $x
\in \langle j,-1,i \rangle$.
\end{proof}

\begin{cor} \label{Pfam}  Let $F$ be a field of characteristic $p$.
If  $A(K/F)$ is a full algebraic matroid of rank at least three,
then there exists an embedding of $L_p $ in $A(K/F)$, and for every
such embedding $h^ \infty (L_p )$ is a Desarguesian projective plane
of order $p$ in $A(K/F)$.
\end{cor}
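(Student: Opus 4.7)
The plan is to reduce the corollary to Theorem \ref{PP} applied to $A(K/F)$. Since Lindstr\"om's theorem (recalled in the Preliminaries) guarantees that every full algebraic matroid is harmonic, the second assertion --- that $h^{\infty}(L_p)$ is a Desarguesian projective plane of order $p$ for every embedding --- follows immediately from Theorem \ref{PP}. So the only remaining task is to exhibit at least one embedding of $L_p$ in $A(K/F)$.

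To construct one, I would exploit the characteristic hypothesis. Since $\operatorname{char} F = p$, the prime subfield $\mathbb Z_p$ lies inside $F$, and since $A(K/F)$ has rank at least three, I can choose elements $X, Y, Z \in K$ that are algebraically independent over $F$. Using the canonical coordinate representatives of the ground set of $L_p$ listed in the Preliminaries, define
\[
\varphi([a,b,c]) := aX + bY + cZ \in K
\]
for each $[a,b,c] \in L_p$ with $a,b,c \in \mathbb Z_p$. I claim that $\varphi$ is an algebraic embedding of $L_p$ into $A(K/F)$.

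The verification splits in two. First, $\varphi$ is injective on the ground set: if two canonical representatives $(a,b,c), (a',b',c') \in \mathbb Z_p^3$ yielded the same element, then $(a-a')X + (b-b')Y + (c-c')Z = 0$, and the algebraic independence of $X,Y,Z$ over $F \supseteq \mathbb Z_p$ would force all three coefficients to vanish. Second, for any subset $S \subseteq L_p$, independence in $L_p$ is by construction the $\mathbb Z_p$-linear independence of the associated coordinate vectors in $\mathbb Z_p^3$ (since $L_p$ is a restriction of $PG(2,p)$); and $\varphi(S)$ is algebraically independent over $F$ if and only if the linear forms $aX+bY+cZ$ are $F$-linearly independent, which in turn is equivalent to $F$-linear independence of their coefficient vectors. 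Because those vectors have entries in the prime subfield, this is the same as $\mathbb Z_p$-linear independence.

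I expect the subtlest step to be this last chain of equivalences: algebraic independence over $F$ of the linear forms $aX+bY+cZ$ reduces, via the algebraic independence of $X,Y,Z$, to $F$-linear independence of the coefficient vectors, which descends to $\mathbb Z_p$-linear independence because the coefficients themselves live in $\mathbb Z_p$. Once this is in place, the embedding satisfies the hypotheses of Theorem \ref{PP}, and the corollary follows.
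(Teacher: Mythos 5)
Your proposal is correct, but it handles the existence half by a genuinely different route from the paper. For the second assertion you do exactly what the paper does: invoke Lindstr\"om's result that full algebraic matroids are harmonic and apply Theorem \ref{PP}. For the existence of an embedding of $L_p$, however, the paper simply cites Theorem 8 of \cite{rf}, whereas you construct one explicitly: since $\operatorname{char}F=p$ you have $\mathbb Z_p\subseteq F$, and you send the canonical representative $[a,b,c]$ to the linear form $aX+bY+cZ$ in three elements $X,Y,Z$ of $K$ algebraically independent over $F$ (these exist because the rank is at least three). This is the classical ``linear representation over characteristic $p$ yields an algebraic representation'' argument, and it does work here; the only place where you assert rather than prove is the direction you yourself flag as subtlest, namely that $F$-linear independence of the coefficient vectors forces algebraic independence of the forms. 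That needs one more line: extend the independent coefficient vectors to a basis of $F^3$, note that the corresponding invertible linear change of variables gives $F(\ell_1,\ell_2,\ell_3)=F(X,Y,Z)$, which has transcendence degree $3$, so the forms are algebraically independent; the passage from $F$-independence to $\mathbb Z_p$-independence is the usual invariance of rank under field extension, as you say, and the converse direction (a linear relation is an algebraic relation) is immediate. Your route buys a self-contained proof that makes explicit exactly where the characteristic-$p$ hypothesis enters, at the cost of reproving a special case of a representability result; the paper's one-line citation of \cite{rf} is shorter and consistent with its reliance on that reference elsewhere (e.g.\ in Theorem \ref{un}), but leaves the construction invisible.
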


\begin{proof} By {\cite [Theorem 8]{rf}} the matroid $L_p $
embeds in $A(K/F)$. Since a full algebraic matroid is a harmonic
matroid, apply Theorem \ref{PP}.
\end{proof}

\section {A minimal matroid } \label{minimalmatroid}

As a natural question we ask: what is a minimal matroid whose
harmonic closure is a projective plane? In \cite {rf} Fl\'orez
proved that $M(p):= L_p  \setminus \big\{ [1,2,0], \dots , [1,p-1,0]
\big\}$ can be extended by harmonic conjugation to $L_p $. The
matroid $M(p)$ is also known as $R_{\text{cycle}}[p]$, which is the
\emph{Reid cycle matroid} \cite [page 52] {jk}. Here we prove that
$R_{\text{cycle}} [p]$ is a minimal matroid whose harmonic closure
is a projective plane. Note that the ground set of
$R_{\text{cycle}}[p]$ is
$$ E := \big\{ [0,i,1],[1,i,1]: i \in \mathbb Z_p \big \} \cup \big \{ [1,0,0],[1,1,0], [0,1,0]  \big \}.$$

\begin{lem} [See \cite {rf}] \label{mnembedding} Any embedding of
$R_{\text{cycle}}[p]$ in a harmonic matroid $M$ extends uniquely to
an embedding of $L_p $ in $M$.
\end{lem}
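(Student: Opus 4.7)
Since $R_{\text{cycle}}[p]$ differs from $L_p$ only by the absence of the $p-2$ points $[1, k, 0]$ with $k \in \{2, \ldots, p-1\}$, the task is to construct these points uniquely inside $M$. For $p = 2$ the ground sets of $R_{\text{cycle}}[2]$ and $L_2$ coincide, so there is nothing to prove, and I assume $p > 2$ throughout.

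I would proceed by induction on $k \geq 1$, starting from $[1,0,0], [1,1,0], [0,1,0] \in R_{\text{cycle}}[p]$. The inductive step is guided by the classical cross-ratio identity
\[
\bigl([0,1,0],\,[1,k,0];\,[1,k-1,0],\,[1,k+1,0]\bigr) = -1
\]
on the line at infinity of $PG(2,p)$, which identifies $[1,k+1,0]$ as the harmonic conjugate of $[1,k-1,0]$ with respect to $[0,1,0]$ and $[1,k,0]$. This harmonic range is realized geometrically by the complete quadrangle with vertices $A=[0,0,1], B=[0,1,1], C=[1,k,1], D=[1,k+1,1]$, all of which lie in $R_{\text{cycle}}[p]$. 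I would show that the seven points $\{[0,1,0], [1,k-1,0], [1,k,0], A, B, C, D\}$ restrict in $M$ to the non-Fano matroid, and then invoke the defining property of a harmonic matroid to obtain $[1,k+1,0]$ as the unique harmonic conjugate in $M$.

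To promote this into a genuine embedding of $L_p$, I must ensure that the newly constructed $[1,k+1,0]$ lies on every line $\langle k+1, -1, i\rangle$ of $L_p$, not merely on the one arising from $i = 0$. I would do this by rerunning the same construction with the shifted quadrangle $A_i=[0,i,1], B_i=[0,i+1,1], C_i=[1,i+k,1], D_i=[1,i+k+1,1]$, which exhibits exactly the same harmonic range on the line at infinity; by uniqueness of the harmonic conjugate in $M$, the outcome must coincide with the already defined $[1,k+1,0]$, and it therefore lies in $\cl\{A_i, D_i\}$ for every $i$.

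The main obstacle is verifying the non-Fano restriction at each step. The six three-point collinearities required all follow from lines already present in $R_{\text{cycle}}[p]$ together with the inductive hypothesis on the previously constructed $[1,k',0]$. The delicate point is the non-collinearity of the ``missing'' triple $\{A, D, [1, k-1, 0]\}$, which distinguishes the non-Fano matroid from the Fano and reflects the hypothesis $p \neq 2$: in the projective model the triple is collinear precisely when $(k+1)-(k-1)\equiv 0$. This non-collinearity must be argued directly from the matroid structure of $M$, using the established independences in $R_{\text{cycle}}[p]$ and the matroid properties of the inductively constructed points, and is the main technical step of the proof. Once the non-Fano restriction is in place, uniqueness of the whole extension is automatic, since each $[1,k,0]$ is defined as the unique harmonic conjugate of a triple lying in the previous stage of the construction.
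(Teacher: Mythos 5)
Your construction is the natural one, and it is essentially the construction of the cited source: this lemma is not proved in the present paper but quoted from \cite{rf}, where the missing points $[1,k,0]$, $k\ge 2$, are generated inductively by harmonic conjugation on the line at infinity, exactly as you propose; your cross-ratio identification of $[1,k+1,0]$ as the conjugate of $[1,k-1,0]$ with respect to $[0,1,0]$ and $[1,k,0]$, the quadrangle $\{[0,i,1],[0,i+1,1],[1,i+k,1],[1,i+k+1,1]\}$, and the shifted-quadrangle-plus-uniqueness device for placing the new point on every $\langle k+1,-1,i\rangle_p$ are all correct in the projective model. The gap is that what you call ``the main technical step'' is in fact the substance of the lemma, and you do not carry it out. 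To invoke the harmonic-matroid axiom you must verify that the seven chosen points of $M$ restrict to $F_7$ or $F_7^-$, and for $k\ge 2$ this requires not just the single independence $\{[0,0,1],[1,k+1,1],[1,k-1,0]\}$ that you single out, but a list of independences involving the inductively constructed points (for instance $[1,k,0]\notin\cl\{[0,0,1],[0,1,1]\}$, $[1,k-1,0]\notin\cl\{[0,0,1],[1,k,1]\}$, the seven points pairwise distinct, rank $3$, no four-point lines). None of these follows from the collinearities your induction carries; each needs an exchange-type argument, e.g.\ if $[1,k-1,0]$ lay in $\cl\{[0,0,1],[1,k+1,1]\}$ then, since it lies in $\cl\{[0,0,1],[1,k-1,1]\}$ by the previous step and differs from $[0,0,1]$, one would force $\{[0,0,1],[1,k-1,1],[1,k+1,1]\}$ dependent, contradicting the embedding of $R_{\text{cycle}}[p]$ for $p$ odd. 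Your inductive hypothesis must be strengthened to carry such independence data, and you never state or prove it, so the induction does not close.

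A second omission: even granting the $\HP$ verifications, producing points that satisfy the dependencies of $L_p$ is not yet an embedding of $L_p$. You must also show the new points $[1,2,0],\dots,[1,p-1,0]$ are pairwise distinct, distinct from all points of the embedded $R_{\text{cycle}}[p]$, and that every set independent in $L_p$ stays independent in $M$ (compare the distinctness arguments in part (iii) of the proof of Lemma \ref{LPP}, which is exactly this kind of bookkeeping for the analogous construction). Your proposal does not address this at all. Uniqueness, by contrast, is essentially fine but is better argued directly: in any extension the image of $[1,k,0]$ must lie in both $\cl\{[0,1,0],[1,0,0]\}$ and $\cl\{[0,0,1],[1,k,1]\}$, two rank-$2$ flats whose union has rank $3$, so by submodularity their intersection has rank at most $1$ and the image is forced to be the harmonically constructed point; phrasing it only as ``uniqueness of the harmonic conjugate'' does not by itself show that an arbitrary extension must agree with your construction.
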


\begin{thm} \label{un} Let $p$ be a prime number. Suppose that
$L_p $ is embedded in a harmonic matroid $M$. Then
$R_{\text{cycle}}[p]$ is a minimal submatroid of $L_p$ for which $h^
\infty (R_{\text{cycle}}[p])$ is equal to $h^ \infty (L_p )$.
\end{thm}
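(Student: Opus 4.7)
My plan is to prove the two conclusions separately: the equality $h^{\infty}(R_{\text{cycle}}[p]) = h^{\infty}(L_p)$, then the minimality claim. The containment $h^{\infty}(R_{\text{cycle}}[p]) \subseteq h^{\infty}(L_p)$ is immediate from monotonicity of $h^{\infty}$ applied to $R_{\text{cycle}}[p] \subseteq L_p$. For the reverse containment I would invoke Lemma \ref{mnembedding}: any embedding of $R_{\text{cycle}}[p]$ in $M$ extends uniquely to an embedding of $L_p$, and the constructive proof in \cite{rf} exhibits each of the removed points $[1,2,0], \ldots, [1,p-1,0]$ as an iterated harmonic conjugate of elements already in $R_{\text{cycle}}[p]$. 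Hence $L_p \subseteq h^{\infty}(R_{\text{cycle}}[p])$, and applying $h^{\infty}$ to both sides gives $h^{\infty}(L_p) \subseteq h^{\infty}(R_{\text{cycle}}[p])$.

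For the minimality claim, I would fix $e \in R_{\text{cycle}}[p]$, set $R_e := R_{\text{cycle}}[p] \setminus \{e\}$, and aim to exhibit some point of $h^{\infty}(L_p)$ absent from $h^{\infty}(R_e)$ — in most cases $e$ itself. The natural organization is by case according to which long line of $L_p$ contains $e$: the third line (where $e \in \{[1,0,0],[1,1,0],[0,1,0]\}$), the first line ($e = [0,j,1]$), or the second line ($e = [1,j,1]$). In each case my plan is to trace the inductive construction underlying Lemma \ref{LPP} and the extension of Lemma \ref{mnembedding}, isolating the stage at which the element $e$ is used as an essential input. The third-line case begins easily because every line of $L_p$ through $e$ retains at most two points of $R_e$, so no initial application of $h$ along such a line can produce $e$; the task is then to show that later stages of $h^{\infty}$ never supply a third collinear point on any line of $h^{\infty}(L_p)$ through $e$.

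The main obstacle is the affine case, where $R_e$ still carries two entire long lines of $L_p$, three points of the third line, and abundant three-point incidences of $L_p$ across the plane; a single step of $h$ already populates many lines of $h^{\infty}(L_p)$ with new points, and a purely local count will not suffice. To propagate the argument through all of $h^{\infty}$, I plan to identify a structural invariant of $R_e$ preserved by every application of $h$ but violated by the full plane $h^{\infty}(L_p)$ — for instance, an algebraic relation among the coordinates of members of $R_e$ or a non-trivial projective symmetry of the set that every harmonic conjugate of a collinear triple must respect. Verifying the closure of such an invariant under harmonic conjugation, and in particular checking that no harmonic-conjugate step introduces a point violating it, is the technically most demanding part of the proof. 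If a direct invariant is elusive, the fallback would be a careful induction that parallels the reconstruction in \cite{rf}, pinpointing the first missing harmonic conjugate whose absence blocks the regeneration of $e$.
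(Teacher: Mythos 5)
The first half of your argument (the equality $h^{\infty}(R_{\text{cycle}}[p]) = h^{\infty}(L_p)$) is correct and is exactly what the paper does: one containment is monotonicity of $h^{\infty}$, and the other follows from Lemma \ref{mnembedding}, whose proof in \cite{rf} recovers the deleted points $[1,2,0],\dots,[1,p-1,0]$ by iterated harmonic conjugation inside $R_{\text{cycle}}[p]$.

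The minimality half, however, contains a genuine gap: it is a research plan, not a proof. You propose to find a structural invariant of $R_{\text{cycle}}[p]\setminus\{e\}$ that is preserved by every application of $h$ but violated by the full projective plane, yet you never exhibit such an invariant, and you yourself flag that verifying its stability under harmonic conjugation would be the technically demanding step; the ``fallback'' induction is likewise unexecuted. Since this is the entire content of the minimality claim, nothing has been established. (A smaller slip: your observation that every line of $L_p$ through a third-line point meets $R_{\text{cycle}}[p]\setminus\{e\}$ in at most two points fails for $e=[0,1,0]$, which lies on two full long lines of $R_{\text{cycle}}[p]$.) The paper sidesteps the combinatorics entirely with a characteristic argument: by \cite[Proposition 12]{ga}, every proper restriction $R_{\text{cycle}}[p]\setminus x$ embeds in a full algebraic matroid over $\mathbb{Q}$, which is harmonic; if $h^{\infty}(R_{\text{cycle}}[p]\setminus x)$ equalled $h^{\infty}(L_p)$ it would be a projective plane of order $p$ containing a copy of $L_p$, and carrying out the same (uniquely determined) harmonic closure inside the full algebraic matroid over $\mathbb{Q}$ would yield an algebraic representation of $L_p$ in characteristic $0$, contradicting \cite[Theorem 7]{rf}, which says $L_p$ is algebraically representable only over fields of characteristic $p$. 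In effect, the invariant you were looking for is ``embeddability in a full algebraic matroid over $\mathbb{Q}$,'' and its preservation under harmonic conjugation is precisely Lindstr\"om's theorem that full algebraic matroids are harmonic.
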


\begin{proof} By Lemma \ref{mnembedding} and Theorem \ref{PP},
$h^ \infty (R_{\text{cycle}}[p])$ is equal to $h^ \infty (L_p )$,
which is a projective plane. Note that every proper minor of
$R_{\text{cycle}}[p]$ embeds in a full algebraic matroid over the
rational numbers $\mathbb Q$ \cite [Proposition 12]{ga}.

We prove that  $h^ \infty (R_{\text{cycle}}[p] \setminus x )  \not =
h^ \infty (L_p)$ for $x\in E$. Indeed, we suppose that there is a
point $x \in E$ such that $h^ \infty (R_{\text{cycle}}[p] \setminus
x ) = h^ \infty (L_p)$ in $M$. Thus, $h^ \infty (R_{\text{cycle}}[p]
\setminus x)$ is a projective plane $\Pi$ of order $p$. Since
$R_{\text{cycle}}[p]\setminus x$ embeds in a full algebraic matroid
over $\mathbb Q$, the projective plane $\Pi$ embeds in a full
algebraic matroid over $\mathbb Q$. Since $L_p$ is a submatroid of
$\Pi$, the matroid $L_p$ embeds in a full algebraic matroid over
$\mathbb Q$. That is a contradiction because $L_p$ is algebraically
representable only over fields of characteristic $p$ \cite [Theorem
7]{rf}.
\end{proof}

\section {M\"obius harmonic nets} \label{Mobius}

Veblen and Young \cite{young} introduced the concept of a M\"obius
harmonic net into projective geometry to construct the field of
rational numbers and the finite field $\mathbb{Z}_p$. We generalize
the concept of conjugate sequence (see \cite {coxeter} or \cite
{young}) from projective geometry to a harmonic matroid $M$. We
prove without using the axioms of projective geometry that this
sequence gives rise to a finite field within $M$ and that the idea
of a M\"obius harmonic net generalizes to $M$. In fact we prove that
if $h^ \infty (F_7^-)$ has a finite number of points, then it is a
projective plane of prime order.

A \emph{conjugate sequence} is a sequence of points $A,a_0, \dots ,
a_n, \dots $ in a harmonic matroid with $\Hc(A,a_{i};a_{i-1},
a_{i+1})$ for $i \in \mathbb Z_{\geq 0}$. A conjugate sequence is
\emph{modular} if at least one of its points is equal to one of the
preceding points. A modular sequence where $a_n$ is the first point
that is equal to one of its preceding points is denoted $A,a_0, a_1,
\dots , a_{n-1}$. A sequence $S$ of collinear points is a
\emph{M\"obius harmonic net} or \emph{net of rationality} if $h^
\infty (S)= S$.

The matroid $R_{\text{cycle}}[p]$ defined in the previous section
generalizes to $R_{\text{cycle}}[n]$ for $n \ge 2$.
$R_{\text{cycle}}[n]$ is a simple matroid of rank 3. We need only
specify the lines of size of at least three, because any set of two
points not belonging to the same dependent line is also a line. The
points are denoted by: $a_0,b_0,a_1,b_1,\dots , a_{n-1},b_{n-1},
c_0,c_1,d.$ The dependent lines are:
$$ \{ a_0,a_1, \dots , a_{n-1}, d \},\{ b_0,b_1,
\dots ,b_{n-1}, d \}, \{ c_0,c_1,d \},\{ a_i,b_{i},c_0 \},\{
a_i,b_{i+1},c_1 \},$$ for $i=0,1, \dots, n-1$,  where  $b_n = b_0$.

\begin{prop} \label{pro1} If $d, b_0, \dots , b_{n-1}$ is a modular
sequence in a harmonic matroid $M$, then $n$ is a prime number and
there are points  $c_1, c_0, a_1, a_0$ in $M$ such that $h^ \infty
\big (\{d, b_0, b_1, c_1, c_0, a_1, a_0\} \big)$ is a projective
plane of order $n$.
\end{prop}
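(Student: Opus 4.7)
The plan has three main steps: first, read the relation $\Hc(d, b_1; b_0, b_2)$ as a Fano/non-Fano configuration on the seven points of the statement; second, build inductively the additional points $a_2, \ldots, a_{n-1}$ together with the remaining dependent lines to realise a full copy of $R_{\text{cycle}}[n]$ inside $h^{\infty}(S)$, where $S := \{d, b_0, b_1, c_1, c_0, a_1, a_0\}$; and third, invoke Lemma \ref{mnembedding} and Theorem \ref{PP} to identify $h^{\infty}(S)$ with a Desarguesian projective plane of order $n$, with primality of $n$ forced at the very end. Starting with the first step: by the definition of harmonic conjugate, $\Hc(d, b_1; b_0, b_2)$ comes with four auxiliary points $o, q, r, s$ such that $\HP(d, b_0, b_1; o, q, r, s)$ holds and $b_2 \in \cl(\{d, b_1\}) \cap \cl(\{o, r\})$. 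Relabel these four as $c_0, c_1, a_0, a_1$ so that the dependent lines of $M\mid S$ include $\{d, b_0, b_1\}$, $\{d, c_0, c_1\}$, $\{d, a_0, a_1\}$, $\{a_0, b_0, c_0\}$, $\{a_1, b_1, c_0\}$, $\{a_0, b_1, c_1\}$, and $\{a_1, b_2, c_1\}$, matching the $n=2$ instance of $R_{\text{cycle}}$.

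For the second step, define, for each $i$ in the range $2 \le i \le n-1$, the point $a_i := \cl(\{c_0, b_i\}) \cap \cl(\{d, a_0\})$; the task is to show that $\{a_i, b_{i+1}, c_1\}$ is also collinear, so that every dependent line of $R_{\text{cycle}}[n]$ appears in $h^{\infty}(S)$. This I would do by induction on $i$: at each stage, $\Hc(d, b_i; b_{i-1}, b_{i+1})$ unfolds via Lemma \ref{tecnico} into a Fano/non-Fano configuration whose four auxiliary points must coincide --- by uniqueness of the harmonic conjugate in $M$ and the inductive hypothesis --- with $c_0, c_1, a_{i-1}$ and the newly defined $a_i$. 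Modularity gives the first repetition $b_n = b_j$; an application of the reversibility of harmonic conjugation (Proposition \ref{conjugates}(i)) to propagate the pair $(b_{n-1}, b_j)$ backward, combined with the minimality of $n$, forces $j = 0$, so $b_n = b_0$ and the induction wraps around cyclically, closing with the line $\{a_{n-1}, b_0, c_1\}$ and embedding $R_{\text{cycle}}[n]$ in $h^{\infty}(S)$.

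For the third step, Lemma \ref{mnembedding} extends $R_{\text{cycle}}[n] \hookrightarrow M$ to an embedding $L_n \hookrightarrow M$. Because $S \subseteq h^{\infty}(L_n)$ (the four auxiliary points $c_0, c_1, a_0, a_1$ are harmonic conjugates of points of $L_n$) and $L_n \subseteq h^{\infty}(S)$ (the inductive construction of the second step already placed $L_n$ there), we have $h^{\infty}(S) = h^{\infty}(L_n)$, and Theorem \ref{PP} gives that this common set is a Desarguesian projective plane of order $n$. Primality of $n$ is then forced: the coordinate ring of such a plane is $\mathbb{Z}_n$, a field only when $n$ is prime; equivalently, by \cite[Theorem 7]{rf}, $L_n$ is algebraically representable only over fields of characteristic $p$ with $n = p$ prime, so a composite $n$ would contradict the embedding $L_n \hookrightarrow M$.

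The hardest step is the inductive construction in the second part: one must show that the intersection $\cl(\{c_0, b_i\}) \cap \cl(\{d, a_0\})$ is genuinely the point simultaneously witnessing the two collinearities $\{a_i, b_i, c_0\}$ and $\{a_i, b_{i+1}, c_1\}$ required by $R_{\text{cycle}}[n]$. This is a scaled-down analogue of the delicate bookkeeping in the proof of Lemma \ref{LPP}: at each step one must build the correct Fano/non-Fano witness of $\Hc(d, b_i; b_{i-1}, b_{i+1})$ and use uniqueness of the harmonic conjugate in $M$ to pin down its four auxiliary points as $c_0, c_1, a_{i-1}$ and the new $a_i$.
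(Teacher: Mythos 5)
Your overall strategy (induct along the sequence to realise $R_{\text{cycle}}[n]$ inside the harmonic closure and then appeal to the earlier machinery) matches the paper's, but two of your steps have genuine gaps. First, the points $a_i$ cannot be \emph{defined} as $\cl(\{c_0,b_i\})\cap\cl(\{d,a_0\})$: a harmonic matroid is not (yet) a projective plane, so two coplanar lines of $M$ need not meet in a point, and that intersection may be empty. Existence of $a_t$ must come from the harmonic axioms; the paper obtains it from Lemma \ref{tconjugate}. Relatedly, uniqueness in a harmonic matroid is uniqueness of the conjugate point $x'$, not of the auxiliary witnesses $o,q,r,s$, so you cannot argue that the witnesses of $\Hc(d,b_i;b_{i-1},b_{i+1})$ ``must coincide'' with $c_0,c_1,a_{i-1},a_i$. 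The correct direction is the reverse: use the inductive collinearities to check that $\HP(d,b_{t-2},b_{t-1};c_1,c_0,a_{t-1},a_{t-2})$ holds, conclude there is a conjugate $x$ of $b_{t-2}$ with respect to $d,b_{t-1}$ collinear with $c_1,a_{t-1}$, identify $x=b_t$ by uniqueness of the conjugate, then apply Lemma \ref{tconjugate} to get $a_t$ with $\{a_t,b_t,c_0\}$ and $\{d,a_{t-2},a_{t-1},a_t\}$ collinear, and finally use a second $\HP$ configuration to place $b_{t+1}$ on $\cl(\{c_1,a_t\})$.

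Second, the order of your last step is backwards. Lemma \ref{mnembedding} and Theorem \ref{PP} are stated and proved only for prime order (the induction in Lemma \ref{LPP} is arithmetic $\bmod\ p$, and the paper's closing remarks note that the construction genuinely fails for composite orders), so you cannot first produce ``a Desarguesian plane of order $n$'' and extract primality afterwards. Your fallback justifications do not close this gap: a Desarguesian plane only forces $n$ to be a prime power, and \cite[Theorem 7]{rf} concerns algebraic representability, whereas $M$ is merely harmonic, not algebraic. Primality must be established first; the paper does so by observing that $M|E$, with $E=\{d,a_0,\dots,a_{n-1},b_0,\dots,b_{n-1},c_0,c_1\}$, is a copy of $R_{\text{cycle}}[n]$ embedded in a harmonic matroid and citing \cite[Theorem 10]{rf} to get $n$ prime, after which Theorem \ref{un} (equivalently Lemma \ref{mnembedding} plus Theorem \ref{PP}) yields the plane. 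You also still owe the verification that the first repeated point is $b_0$ rather than some $b_j$ with $j\ge 1$; the paper settles this with the collinearity argument through $c_1$, and your ``reversibility plus minimality'' sketch would need to be worked out to that level of detail.
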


\begin{proof} Since $\Hc(d,b_1;b_0,b_2)$, there are points
$c_1, c_0, a_1, a_0$ with $\HP(d, b_0, b_1; c_1, c_0, a_1, a_0)$ and
$\{c_1,a_1,b_2\}$ collinear. Note that by Lemma \ref{tconjugate}
there is a point $a_2$ such that $ \{a_2, b_2,  c_0\}$ and $\{d,
a_0, a_1,  a_2\}$ are collinear sets.

For $t \in \mathbb{Z}_n $, let  $P(t)$ be the statement: There is a
point $a_t$ in $M$ such that $\{a_t,b_t, c_0\}$,
$\{a_t,b_{t+1},c_1\}$ and $\{d,a_0, \dots, a_t\}$ are collinear
sets.

From the first paragraph we can see that $P(0)$ and $P(1)$ hold.

We now suppose that $P(t-2)$ and $P(t-1)$ hold for some fixed $t \in
\{2, \dots , n-1 \}$, and we deduce $P(t)$.

From  the modular sequence $d, b_0, \dots , b_{n-1}$, $\HP(d, b_0,
b_1; c_1, c_0, a_1, a_0)$, $P(t-2)$, and $P(t-1)$ we deduce that
$$\HP( d, b_{t-2}, b_{t-1}; c_1, c_0, a_{t-1}, a_{t-2}).$$
Thus, there is a point $x$ with $\Hc(d,b_{t-1};b_{t-2},x)$ and such that
$\{c_1,a_{t-1},x\}$ is a collinear set. Since
$\Hc(d,b_{t-1};b_{t-2},b_t)$, by uniqueness of the harmonic
conjugate $x = b_t$. Therefore by Lemma \ref{tconjugate} there is a
point $a_t$ with $ \{a_t, b_{t}, c_0\}$ and $\{d, a_{t-2}, a_{t-1},
a_t\}$ collinear sets. These imply that
$$\HP( d, b_{t-1}, b_t; c_1, c_0, a_t, a_{t-1}).$$
Thus, there is a point $b_{t+1}$ such that
$\Hc(d,b_t;b_{t-1},b_{t+1})$ and $\{c_1,a_t,b_{t+1}\}$ is a
collinear set. (Note that if $t = n-1$ then $\{a_{n-1},b_n,c_1\}$ is
collinear and $b_n \in \{d, b_0, \dots , b_{n-1} \}$.) That proves
$P(t)$.

We now prove that $h^ \infty \big (\HP(d, b_0, b_1; c_1, c_0, a_1, a_0) \big)$
is a projective plane of prime order.

We suppose that $b_n = b_t$ for some $t \in \{1, \dots , n-1 \}$.
Since $\{c_1,a_{t-1},b_{t}\}$ and  $\{c_1,a_{n-1},b_n \}$ are
collinear, the set $\{c_1,a_{t-1}, a_{n-1} \}$ is collinear. This
implies that $\{c_1,a_0, a_1 \}$ is collinear. That is contradiction
because the set is not collinear in $\HP(d, b_0, b_1; c_1, c_0, a_1,
a_0)$. This implies that $b_n = b_0$.

Note that $M|E$ where $E= \{d,a_0, \dots , a_{n-1}, b_0, \dots ,
b_{n-1}, c_0, c_1 \}$ is the matroid $R_{\text{cycle}}[n]$. Thus
$R_{\text{cycle}}[n]$ is embedded in a harmonic matroid.  By \cite
[Theorem 10] {rf}, $n$ is a prime number.

By definition of $E$ we deduce that
$$ h^ \infty ( \{d, b_0, b_1; c_1, c_0, a_1, a_0\}) = h^ \infty (M|E) =  h^\infty (R_{\text{cycle}}[n]).$$
Therefore, by Proposition \ref{un}   $h^ \infty
(R_{\text{cycle}}[n])$ is a projective plane of order $n$.
\end{proof}

\begin{cor}If $d, b_0, \dots , b_n$ is a modular sequence in a harmonic
matroid $M$, then it is a M\"obius harmonic net.

\end{cor}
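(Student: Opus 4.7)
My plan is to deduce the corollary directly from Proposition \ref{pro1} by exhibiting the sequence as a line of the projective plane it gives rise to, and then invoking the fact (implicit in Proposition \ref{LPP3}) that every line of the resulting Desarguesian plane is harmonically closed.

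First I would apply Proposition \ref{pro1} to the modular sequence $d,b_0,\dots,b_{n-1}$ to produce the auxiliary points $c_0,c_1,a_0,a_1$ and to conclude that $n$ is prime and that $h^\infty(\{d,b_0,b_1,c_1,c_0,a_1,a_0\})$ is a Desarguesian projective plane $\Pi$ of order $n$. In the course of that proof it is shown that the inductively constructed points $a_0,\dots,a_{n-1},b_0,\dots,b_{n-1},c_0,c_1,d$ realize a copy of $R_{\text{cycle}}[n]$ embedded in $M$, and that $b_n=b_0$. The dependent line $\{d,b_0,b_1,\dots,b_{n-1}\}$ of $R_{\text{cycle}}[n]$ is therefore a collinear set in $M$, and it has exactly $n+1$ elements.

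Next I would observe that, since $\Pi$ is a projective plane of order $n$, each of its lines contains exactly $n+1$ points. Any collinear subset of $\Pi$ containing $n+1$ points must therefore \emph{be} a line of $\Pi$, so $\{d,b_0,\dots,b_{n-1}\}$ is a line of $\Pi$. By Proposition \ref{LPP3} (applied to the projective plane produced by Proposition \ref{pro1}), every line of $\Pi$ is harmonically closed in $M$. Hence
\[
h^{\infty}(\{d,b_0,\dots,b_{n-1}\}) = \{d,b_0,\dots,b_{n-1}\},
\]
which is exactly the definition of a M\"obius harmonic net.

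The only mildly delicate point is to justify that the set $\{d,b_0,\dots,b_{n-1}\}$ appears as a bona fide line of the constructed plane, rather than as a proper subset of a larger collinear set; this is handled uniformly by the counting argument in the previous paragraph together with the fact, established inside the proof of Proposition \ref{pro1}, that the $b_i$ and $d$ are all distinct (since $b_n$ is the first repeat and equals $b_0$). Everything else is a direct appeal to results already proved, so no further computation is needed.
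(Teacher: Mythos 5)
Your overall route is the intended one: the paper states this corollary without proof, and it is meant to fall out of Proposition \ref{pro1} combined with the fact (Proposition \ref{LPP3}) that the lines of the constructed plane are harmonically closed; your reduction to that pair of results is sound, including the observation that $d,b_0,\dots,b_{n-1}$ are $n+1$ distinct collinear points of the plane $\Pi=h^\infty(R_{\text{cycle}}[n])=h^\infty(L_n)$.

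The one step that is not justified as you state it is the claim that ``any collinear subset of $\Pi$ containing $n+1$ points must be a line of $\Pi$.'' Counting lines of $\Pi$ does not by itself give this: the lines of $\Pi$ are rank-$2$ sets in $M$, but a priori a set of points of $\Pi$ that is collinear \emph{in the matroid $M$} need not be contained in any line of $\Pi$, so before counting you must know the set sits inside some member of $\cL$. You can close this in two ways. Either invoke the fact used in the proof of Proposition \ref{LPP3}, that any three points of $P$ collinear in $M$ lie in a common set $\langle j,k,i\rangle$ (if some point $z$ of $P$ lay in $\cl(\{x,y\})$ but off the line of $\Pi$ through $x,y$, then every line of $\Pi$ through $z$ would be contained in that rank-$2$ flat and hence all of $P$ would be, contradicting $\rk\big(h^\infty(L_n)\big)=\rk(L_n)=3$ by Lemma \ref{closure}); or, more directly and avoiding the counting altogether, note that under the unique extension (Lemma \ref{mnembedding}) of the embedded $R_{\text{cycle}}[n]$ to an embedding of $L_n$, the set $\{d,b_0,\dots,b_{n-1}\}$ is exactly one of the full lines of $L_n$ through $[0,1,0]$ (one of $\langle 1,0,0\rangle$, $\langle 1,0,-1\rangle$), i.e.\ literally a member of $\cL_1$ and hence a line of $\Pi$. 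With either repair, Proposition \ref{LPP3} gives $h^\infty\big(\{d,b_0,\dots,b_{n-1}\}\big)=\{d,b_0,\dots,b_{n-1}\}$, which is the definition of a M\"obius harmonic net, as you conclude.
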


\section* {Remarks}

\emph{Question.} In \cite {b2} can be found the definition of the
complete lift matroid of a group expansion graph, $L_0(\mathfrak{G}
K_3)$, where $\mathfrak{G}$ is a group. Our $L_p$ is
$L_0(\mathbb{Z}_p K_3)$. In \cite {rf} Fl\'orez proved that
$L_0(\mathbb{Z}_n K_3)$ is not embeddable in any harmonic matroid if
$n$ is not a prime number. It will be interesting to know whether,
if $L_0(\mathbb{Z} K_3)$ embeds in a harmonic matroid $M$, then $h^
\infty (L_0(\mathbb{Z} K_3))$ is a projective plane in $M$. It is
known that $L_0(\mathbb{Z} K_3)$ embeds in a full algebraic matroid
over the rational numbers and it is also known that this matroid
embeds in a Desarguesian plane coordinatized over the rational
numbers.

Our construction does not produce a projective plane except for a
prime order. In fact, if $\mathfrak{G} = (\mathbb Z_p)^n$ where $n
\geq 2$, then $L_0(\mathfrak{G} K_3)$ embeds in a harmonic matroid
(in fact in a projective plane of order $p^n$) but $h^{\infty}\big(
L_0(\mathfrak{G} K_3) \big)$ is not a projective plane. (The proof
will appear elsewhere.) It follows that our method requires prime
coordinates. So, it cannot be done in a manner without coordinates.

\section* {Acknowledgment}
The author is indebted to his advisor, Thomas Zaslavsky, and referees
for their extensive comments and corrections that helped to improve
the presentation.

\end{document}